\newcommand{\Z}{\mathbb{Z}}
\newcommand{\C}{\mathbb{C}}
\newcommand{\Q}{\mathbb{Q}}
\newcommand{\rp}{\mathbb{R}\text{P}}
\newcommand{\ri}{\Omega_1}
\newcommand{\rii}{\Omega_2}
\newcommand{\riii}{\Omega_3}
\newcommand{\riv}{\Omega_4}
\newcommand{\rv}{\Omega_5}
\newcommand{\rvi}{\Omega_6}
\newcommand{\rvii}{\Omega_7}
\newcommand{\rviii}{\Omega_8}
\newcommand{\rix}{\Omega_9}
\newcommand{\rpq}{\Omega_{\alpha,\beta}}
\newcommand{\s}[1]{\ensuremath{\bold{S}^{#1}}}
\begin{document}

\title{On the Seifert fibered space link group}

\author{Bo\v stjan Gabrov\v sek \and Enrico Manfredi}




\theoremstyle{plain}
\newtheorem{theorem}{Theorem}[section]
\newtheorem{lemma}[theorem]{Lemma}
\newtheorem{corollary}[theorem]{Corollary}
\newtheorem{definition}[theorem]{Definition}

\theoremstyle{definition}
\newtheorem{example}[theorem]{Example} 
\newtheorem*{remark}{Remark}

\maketitle

\begin{abstract}
We introduce generalized arrow diagrams and generalized Reidemeister moves for diagrams of links in Seifert fibered spaces. We give a presentation of the fundamental group of the link complement. As a corollary we are able to compute the first homology group of the complement and the twisted Alexander polynomials of the link.
\noindent
{{\it Mathematics Subject
Classification 2010:} Primary 57M27; Secondary 57M05.\\
{\it Keywords:} knots, links, Seifert fibered spaces, knot group, first homology group, twisted Alexander polynomial.}\\
\end{abstract}

\section{Introduction}

A Seifert fibered space $M$ can be constructed from an $S^1$-bundle $\pi: E \rightarrow F$ over a surface $F$, with some possible disjoint rational $(\alpha, \beta)$-surgeries performed along the fibers $\pi^{-1}(x)$, $x\in F$. Here the coefficients $\alpha$ and $\beta$ are two coprime integers where $0 \leq \beta < \alpha$. The fibers along which the surgeries are performed are called exceptional fibers, all other fibers are called ordinary. If the base space $F$ is compact then $M$ is compact and the number of exceptional fibers is finite~\cite{S}.

Links in Seifert fibered spaces are embeddings of several disjoint copies of circles $\s1$. As for links in the $3$-sphere, we may want to find suitable ways to represent them and we may want to find invariants to distinguish them. 

One of the main reason to be interested in links in Seifert fibered spaces is the computation of Skein Modules of these particular $3$-manifolds. Skein modules are not only important invariants of $3$-manifolds, but are also useful for topological quantum field theories.


Links in $S^1$-bundles are best described by arrow diagrams introduced by Mroczkowski in \cite{Mr}. Since Seifert fibered spaces resemble $S^1$-bundles, we use these types of diagrams as in \cite{Mr2}.
In Section~\ref{linkmoves} we present a generalized planar arrow diagram for a link in a Seifert fibered space, together with a list of ten generalized Reidemeister moves that satisfy the request that two link diagrams represent the same link up to ambient isotopy if and only if there exists a finite sequence of generalized Reidemeister moves between the two diagrams. 

On the other hand we can consider studying links only up to difffeomorphism, i.e. we say that two links $L_1$ and $L_2$ are diffeo-equivalent if and only if there exists a self-diffeomorphism $h: M \rightarrow M$ such that $h(L_1) = L_2$. This condition is weaker than ambient isotopy and we explore it in Section~\ref{linkgroup} by providing a method of calculating the group of a link which is able to detect diffeo-inequivalent links.

In Section~\ref{linkhomology}, the link group is abelianized in order to obtain the first homology group of the complement. In some cases it is possible to recover it directly from the diagram, using the homology class of the link components. With some assumption on the manifold and on the link, it is possible to understand the behaviour of the rank and of the torsion of the homology.

In Section~\ref{linkTAP} we exploit the group presentation and the homology characterization to compute through Fox calculus a class of twisted Alexander polynomials, corresponding to a particular 1-dimensional representation of the link group. This particular class contains the usual Alexander polynomial and a family of twisted polynomials that is able to keep track of the torsion of the group; for example the polynomials of this family becomes zero on local links. Moreover, as expected, the polynomials split under connected sum of links. 
At last, in Section~\ref{linkexample}, an example illustrates all the machinery developed.


\section{Diagrams of links in Seifert fibered spaces}\label{linkmoves}


In order to define diagrams of knots inside Seifert fibered spaces, we need to explicit their construction.

Let $F$ be a compact surface and let $G$ be the fundamental polygon of $F \approx G/_\sim$ where $\sim$ is the equivalence relation that identifies the points on the boundary of the fundamental polygon as depicted on Figure \ref{fig:polygons}. Denote the edges by $a_{i}$ or $b_{i}$. If $F$ is an orientable genus $g>0$ surface, we have the identification $a_1 b_1 a_1^{-1} b_1^{-1} a_2 b_2 a_2^{-1} b_2^{-1} \cdots a_g b_g a_g^{-1} b_g^{-1}$ of the edges (Figure~\ref{fig:poly1}), if $F$ is a sphere we have the identification $a_1 a_1^{-1}$ (Figure~\ref{fig:poly2}), and if $F$ is a non-orientable genus $g$ surface, we have the identification $a_1^2 a_2^2 \cdots a_g^2$ (Figure~\ref{fig:poly3}).

\begin{figure}[htb]
\centering
\subfigure[$T^2 \# T^2$]{\begin{overpic}[page=3]{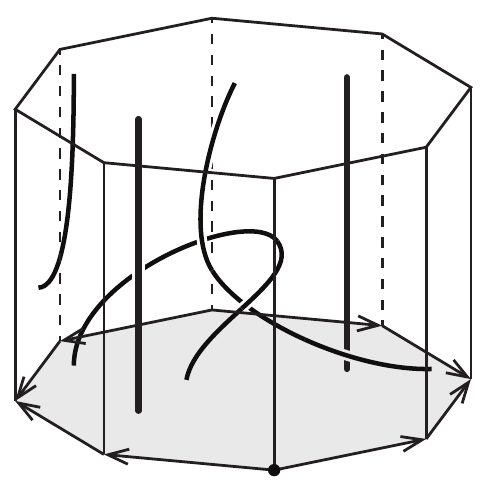}
\put(14,13){$b_2$}\put(-2,46){$a_2$}\put(10,82){$b_2$}\put(47,95){$a_2$}
\put(79,82){$b_1$}\put(94,46){$a_1$}\put(79,13){$b_1$}\put(45,0){$a_1$}
\end{overpic}\label{fig:poly1}}\hspace{.5cm}
\subfigure[$S^2$]{\begin{overpic}[page=38]{seif}
\put(47,3){$a_1$}\put(47,91){$a_1$}
\end{overpic}\label{fig:poly2}}\hspace{.5cm}
\subfigure[$K \# \rp^2$]{\begin{overpic}[page=4]{seif}
\put(9,26){$a_3$}\put(9,70){$a_3$}\put(47,89){$a_2$}\put(80,70){$a_2$}\put(80,26){$a_1$}\put(47,6){$a_1$}
\end{overpic}\label{fig:poly3}}
\caption{Fundamental polygons.}
\label{fig:polygons}
\end{figure}

An arbitrary compact Seifert fibered space $M$ can be constructed from $G$ as follows.
Take $G \times [0,1]$ and by glueing each $\{x\}\times \{  0 \}$ to $\{x\}\times \{  1 \}$, $x\in G$, we get the trivial circle bundle $G \times S^1$. Since $G$ is a disk, we can orient all the fibers $\{x\} \times S^1$ coherently. If two oriented edges $a_i$ and $a_i'$ are identified in $G$, in order to get $F$, we can identify the tori $a_i \times S^1$ and $a_i' \times S^1$ in two essentially different ways: $a_i \times S^1$ is glued to $a_i' \times S^1$ by identity or by a reflection on the $S^1$ component. According to this, we assign to each edge $a_i$ the sign $\gamma_i = \pm 1$ and to each edge $b_i$ the sign  $\delta_i = \pm 1$. In both cases $+1$ is chosen if the identification is made by the identity and $-1$ otherwise.

After the above identifications the resulting space is just a compact $S^1$-bundle over $F$.
We can get an arbitrary Seifert fibered space by performing $(\alpha_i, \beta_i)$-surgeries along a $k$ disjoint fibers $\{x_i\}\times S^1$, $x_i \in F$ for $0 \leq i \leq k$, where $\alpha_i$ and $\beta_i$ are again two coprime integers, $0 \leq \alpha_i < \beta_i$\footnote{If we refer to Seifert's original paper~\cite{S}, Seifert fibered spaces are usually denoted by $S(O,o,g | b; \alpha_1, \beta_1; \alpha_2, \beta_2; \ldots; \alpha_k, \beta_k)$; in our case we use the assumption $b=0$, which is not restrictive, since, if we add a surgery with coefficients $(b,1)$, we get the same result.}.

We remark that for an orientable Seifert fibered space $M$, the base space does not need to be oriented, but the values $\gamma_i$ (and $\delta_i$) are determined. In the case $F$ is orientable,  $\gamma_i = \delta_i = +1$ and in the case $F$ in non-orientable $\gamma_i = -1$ for all $i$.


If a link $L$ lies in a thickened surface $F\times I$, the diagram of $L$ is just the regular projection of $L$ onto $F \approx F \times \{0\}$ along with the information of under- and overcrossings with respect to the projection. If we present $F$ by its fundamental polygon, we obtain a set of curves in $G$.
By glueing $F\times \{0\}$ to $F\times \{1\}$ we get a $S^1$-bundle $M$ over $F$. We now map a link $L \subset M$ to $F$ by the induced projection $p$ and keep track of where $L$ passes the section $F\times \{0\}$ by decorating the diagram with an arrow at the passage point, where the arrow indicates the direction we should travel along $L$ in order to cross $F \times \{ 1 \}$ and emerge from $F \times \{ 0 \}$ (Figure \ref{fig:arrows}). Let us remark that this induced projection agrees with the fibration $\pi$ in the case we do not have any exceptional fibers in $M$.

\begin{figure}[htb]
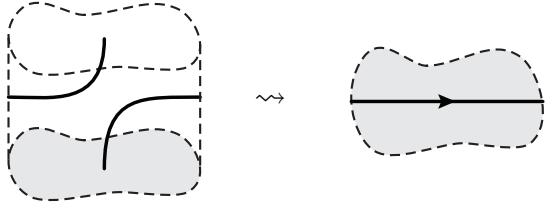

\centering
\begin{overpic}[page=5]{seif}\end{overpic}\raisebox{40pt}{$\;\;\leadsto\;\;$}\
\begin{overpic}[page=6]{seif}\end{overpic}
\caption{Constructing an arrow diagram.}
\label{fig:arrows}
\end{figure}

Since the projection maps an exceptional fiber to a point in the base space, it is enough to specify the image of each exceptional fiber in $G$, which is done by placing a point on $G$ decorated by the surgery coefficient $(\alpha_j, \beta_j)$ of the fiber (Figure \ref{fig:diagram}).

\begin{figure}[htb]
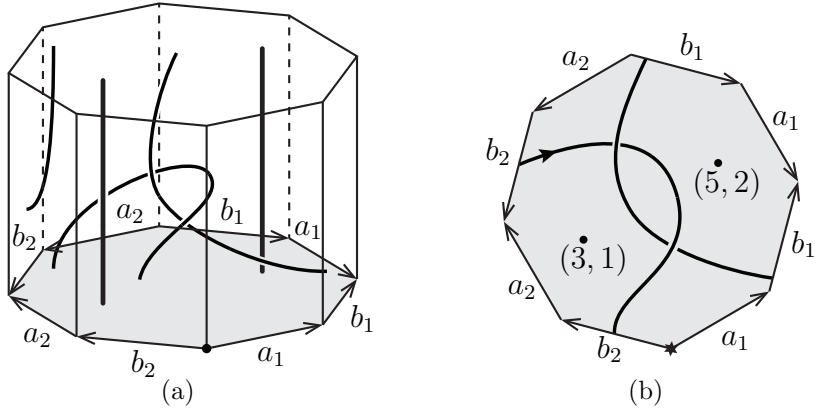

\centering
\subfigure[]{\begin{overpic}[page=1]{seif}
\put(36,-3){$b_2$}\put(7,5){$a_2$}\put(4,31){$b_2$}\put(32,38.5){$a_2$}   
\put(60,38){$b_1$}\put(80,34){$a_1$}\put(95,9){$b_1$}\put(70,0){$a_1$} 
\end{overpic}\label{fig:diag1}}\hspace{1.5cm}
\subfigure[]{\begin{overpic}[page=2]{seif}
\put(33,1){$b_2$}\put(5,21){$a_2$}\put(-2,63){$b_2$}\put(22,93){$a_2$}   
\put(59,96){$b_1$}\put(88,74){$a_1$}\put(94,34){$b_1$}\put(72,5){$a_1$} 
\put(21,29){$(3,1)$}\put(63,53){$(5,2)$}\end{overpic}\label{fig:diag2}}
\caption{A diagram of a link (a) and its projection (b) in a Seifert fibered space with an orientable genus 2 base space and $\gamma_1 = -1$, $\delta_1 = \gamma_2 = \delta_2 = +1$ and two exceptional points with coefficients $(3,1)$ and $(5,2)$.}
\label{fig:diagram}
\end{figure}

We complete this section by providing a list of generalized Reidemeister moves associated to the arrow diagrams. In the interior of $G$ and outside of exceptional fibers we have three classical moves Reidemeister  moves $\ri - \riii$  and two moves $\riv$ and $\rv$ involving arrows~\cite{Mr,Mr2,MD}(Figure \ref{fig:reid1}).

\begin{figure}[htb]
\centering
\subfigure[$\ri$]{\begin{overpic}[page=9]{seif}\end{overpic}\raisebox{10pt}{$\;\;\longleftrightarrow\;\;$}\begin{overpic}[page=10]{seif}\end{overpic}}
\hspace{1cm}
\subfigure[$\rii$]{\begin{overpic}[page=11]{seif}\end{overpic}\raisebox{10pt}{$\;\;\longleftrightarrow\;\;$}\begin{overpic}[page=12]{seif}\end{overpic}}
\hspace{1cm}
\subfigure[$\riii$]{\begin{overpic}[page=13]{seif}\end{overpic}\raisebox{10pt}{$\;\;\longleftrightarrow\;\;$}\begin{overpic}[page=14]{seif}\end{overpic}}
\subfigure[$\riv$]{\begin{overpic}[page=15]{seif}\end{overpic}\raisebox{12pt}{$\;\;\longleftrightarrow\;\;$}\begin{overpic}[page=16]{seif}\end{overpic}\raisebox{12pt}{$\;\;\longleftrightarrow\;\;$}\begin{overpic}[page=17]{seif}\end{overpic}}
\hspace{1cm}
\subfigure[$\rv$]{\begin{overpic}[page=18]{seif}\end{overpic}\raisebox{12pt}{$\;\;\longleftrightarrow\;\;$}\begin{overpic}[page=19]{seif}\end{overpic}}
\hspace{1cm}
\caption{Classical Reidemesiter moves $\ri$ -- $\riii$ and two ``arrow'' moves $\riv$ and $\rv$.}
\label{fig:reid1}
\end{figure}

Generalized Reidemeister moves $\rvi$ -- $\rix$ act across edges in $G$ (Figure~\ref{fig:reid2}). The move $\rvi$ corresponds to pushing an arc over an edge~\cite{CMM,Mr,Mr2}, $\rvii$ corresponds to pushing a crossing over the edge and comes in two variants: $\rvii^O$ in the case $F$ is orientable and $\rvii^N$ in the case $F$ is non-orientable~\cite{Mr,Mr2,CMM}.  The move $\rvii^\pm$ corresponds to pushing an arrow over an edge, where the sign denotes the sign associated to the edge~\cite{Mr,Mr2}. The move $\rix$ corresponds to pushing an arc over the base point of $G$ and is similar to the move $R_7$ in~\cite{CMM}. The $\rix$ move comes in three flavours: if $G$ is a orientable genus $g>0$ surface we have $\rix^O$, if $G$ is the 2-sphere we have $\rix^S$, and if $G$ is a non-orientable surface we have the move $\rix^N$. Figure~\ref{fig:vis} shows the geometrical interpretation of $\rix$ in the case of a double torus.

\begin{figure}[htb]
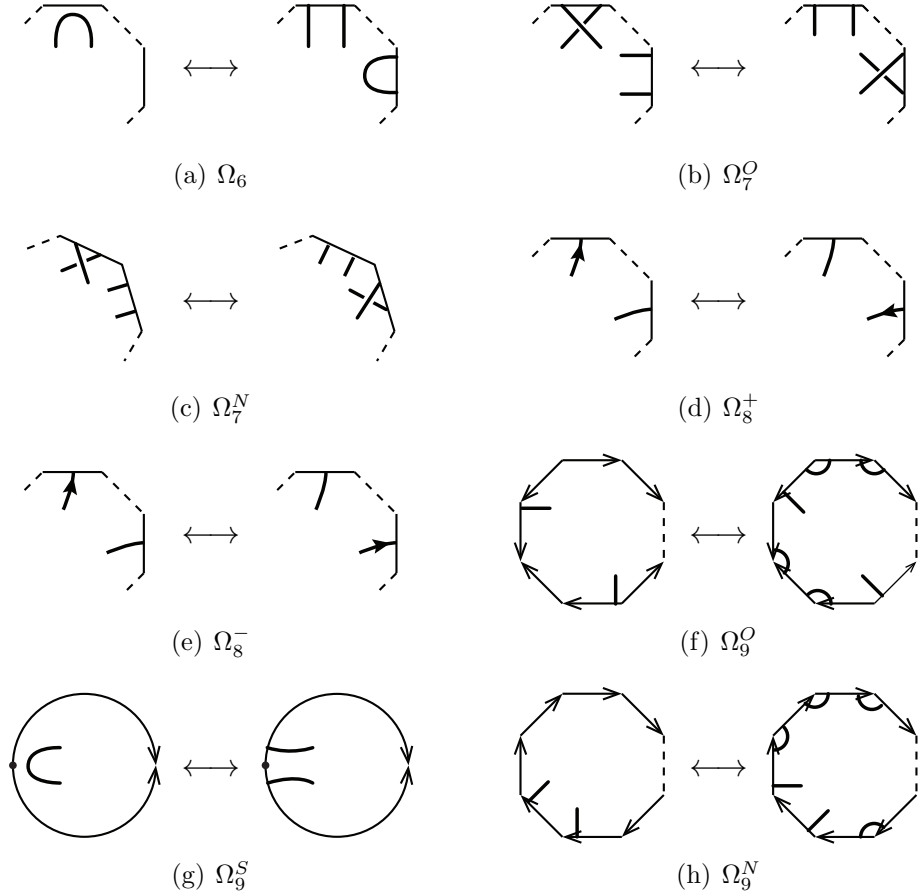

\centering
\subfigure[$\rvi$]{\begin{overpic}[page=20]{seif}\end{overpic}\raisebox{26pt}{$\;\;\longleftrightarrow\;\;$}\begin{overpic}[page=21]{seif}\end{overpic}}
\hspace{1cm}
\subfigure[$\rvii^O$]{\begin{overpic}[page=22]{seif}\end{overpic}\raisebox{26pt}{$\;\;\longleftrightarrow\;\;$}\begin{overpic}[page=23]{seif}\end{overpic}}
\subfigure[$\rvii^N$]{\begin{overpic}[page=24]{seif}\end{overpic}\raisebox{26pt}{$\;\;\longleftrightarrow\;\;$}\begin{overpic}[page=25]{seif}\end{overpic}}
\hspace{1cm}
\subfigure[$\rviii^+$]{\begin{overpic}[page=30]{seif}\end{overpic}\raisebox{26pt}{$\;\;\longleftrightarrow\;\;$}\begin{overpic}[page=31]{seif}\end{overpic}}
\subfigure[$\rviii^-$]{\begin{overpic}[page=32]{seif}\end{overpic}\raisebox{26pt}{$\;\;\longleftrightarrow\;\;$}\begin{overpic}[page=33]{seif}\end{overpic}}
\hspace{1cm}
\subfigure[$\rix^O$]{\begin{overpic}[page=26]{seif}\end{overpic}\raisebox{26pt}{$\;\;\longleftrightarrow\;\;$}\begin{overpic}[page=27]{seif}\end{overpic}}
\subfigure[$\rix^S$]{\begin{overpic}[page=40]{seif}\end{overpic}\raisebox{26pt}{$\;\;\longleftrightarrow\;\;$}\begin{overpic}[page=41]{seif}\end{overpic}}
\hspace{1cm}
\subfigure[$\rix^N$]{\begin{overpic}[page=28]{seif}\end{overpic}\raisebox{26pt}{$\;\;\longleftrightarrow\;\;$}\begin{overpic}[page=29]{seif}\end{overpic}}
\caption{Additional Reidemeister moves.}
\label{fig:reid2}
\end{figure}

\begin{figure}[htb]
\centering
\begin{overpic}[page=37]{seif}
\put(21,35){$a_1$}\put(71,35){$a_2$}
\put(20,6){$b_1$}\put(76,6){$b_2$}
\end{overpic}\raisebox{12pt}{$\;\;\longleftrightarrow\;\;$}\begin{overpic}[page=39]{seif}
\put(21,35){$a_1$}\put(71,35){$a_2$}
\put(20,6){$b_1$}\put(76,6){$b_2$}
\end{overpic}
\caption{Visualization of the move $\rix^O$.}
\label{fig:vis}
\end{figure}

Diagrams with exceptional points are equipped with an additional ``slide'' move (also known as the band move) $\rpq$ that corresponds to sliding an arc over the exceptional point in a diagram, i.e. sliding an arc of a link over the meridional disk of the solid torus which is attached when performing the $(\alpha,\beta)$-surgery. The move consists of going $\beta$ times around the exceptional point and adding $\alpha$ arrows uniformly on every $2 \pi \beta / \alpha$ angle as shown on Figure~\ref{fig:slide}, see~\cite{Mr2} for more details.

\begin{figure}[htb]
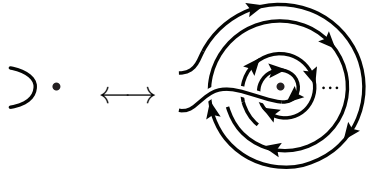

\centering
\begin{overpic}[page=34]{seif}\end{overpic}\raisebox{25pt}{$\;\;\longleftrightarrow\;\;$}\begin{overpic}[page=35]{seif}\end{overpic}
\caption{The slide move $\rpq$ over an exceptional point.}
\label{fig:slide}
\end{figure}


\section{The link group}\label{linkgroup}

If the Seifert fibered space $M$ is described as an $S^{1}$-bundle over the surface $F$ with possible rational surgeries and a link $L$ in $M$ is described 
by a generalized arrow diagram as in the previous section,
then we can find a presentation for the group of the link. Before the description, we recall a standard presentation for the group of the  Seifert fibered space $M$ itself, following \cite{O}.

\paragraph{The group of the Seifert fibered space}

If $F$ is orientable, 
the fundamental group $\pi_1(M)$ has the following presentation:

\begin{align}\label{gruppoFor}
\pi_1(M) = \;& \langle a_1, b_1, \ldots, a_g, b_g, q_1, \ldots , q_k, h \; | \; 
[a_1, b_1] \cdots [a_g,b_g]  q_1 \cdots q_k = 1, \nonumber \\
& a_i h a_{i}^{-1} h^{-\gamma_{i}}=1,
b_i h b_{i}^{-1}  h^{-\delta_{i}} =1,
\forall i \in \{ 1, \ldots, g \}, \nonumber \\
& [q_i, h] =1,
q_i^{\alpha_i} h^{\beta_i} =1,
\forall i \in \{ 1, \ldots, k \}\rangle.
\end{align}

For $F$ non-orientable the presentation is as follows:

\begin{align}\label{gruppoFnor}
\pi_1(M) = \;& \langle a_1, \ldots, a_g, q_1, \ldots , q_k, h \; | \;
a_1^2 \cdots a_g^2 q_1 \cdots q_k = 1, \nonumber \\
& a_i h a_{i}^{-1} h^{-\gamma_{i}}=1,
\forall i \in \{ 1, \ldots, g \} , \nonumber \\
& [q_i, h] =1,
q_i^{\alpha_i} h^{\beta_i} =1,
\forall i \in \{ 1, \ldots, k \}\rangle.
\end{align}

Where $g$ is the genus of the underlying basis $F$, the integer $k$ is the number of surgeries performed with indexes $(\alpha_{i},\beta_{i})$ along the $S^{1}$-fiber $q_{i}$, for every $i= 1, \ldots, k$, the generators $\{a_i , b_i \}_{i \in \{ 1, \ldots, g \} }$ (resp. $\{a_i \}_{i \in \{ 1, \ldots, g \} }$ for the unorientable case) are the standard generators of $\pi_1 (F)$ while $\gamma_{i}$ and $\delta_{i}= \pm 1$ are chosen according to the glueing orientation of the lateral surface of $a_{i}$ and $b_{i}$, respectively; the last generator $h$ represents the $S^1$ fiber.

\paragraph{The group of the link}

Consider an arrow diagram of a link $L$ in a Seifert fibered space $M$ with base surface $F$ and fundamental polygon $G$. Fix an orientation on the link $L$ and on its arrow diagram. 

The overpasses of the link may encounter the boundary of $G$: when this happens, we index this boundary points with the following rule. Fix one of the corners of $G$ as a base point $\ast$; we may assume that it is the left corner of the edge $a_{1}$ as in Figure \ref{fig:generdiag}. 
Starting from the base point and going counterclockwise on the boundary of the diagram, we index by $+1, \ldots, +t$ only the points on the edges oriented according to that direction. The edges with the opposite orientation have the points of the links labeled by $-1, \ldots, -t$, so that $+ i$ and $-i$ are identified.

\begin{figure}[htb]
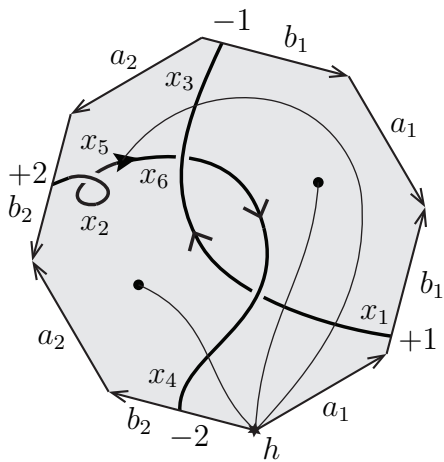

\centering
\begin{overpic}[page=42]{seif}
\put(26,4){$b_2$}\put(5,24){$a_2$}\put(-2,54){$b_2$}\put(22,90){$a_2$}   
\put(63,94){$b_1$}\put(88,74){$a_1$}\put(95,36){$b_1$}\put(72,7){$a_1$} 
\put(36,0){$-2$} \put(-2,62){$+2$}\put(46,98){$-1$}\put(90,23){$+1$} 
\put(31,15){$x_4$}\put(35,85){$x_3$}
\put(15,51){$x_2$}\put(81,30){$x_1$}
\put(15,70){$x_5$}\put(29,62){$x_6$}
\put(58,-3){$h$}
\end{overpic}
\caption{Reading the group generators on the arrow diagram.}
\label{fig:generdiag}
\end{figure}

Each time an arrow occurs, we use the convention that a new overpass begins. 
Moreover, we should assume that no overpass both starts and ends on the boundary or on an arrow. If that were the case, perform an $\Omega_{1}$ move on the overpass.

For the presentation of the group of the link, we use the generators $h$ and  $\{a_i , b_i \}_{i \in \{ 1, \ldots, g \} }$ and the indexes $\gamma_{i}$ and $\delta_{i}$ described for the fundamental group of $M$ in the paragraph above, moreover, we add the generators and relations described below.

We associate to each overpass a loop $x_{i}$, that is oriented by the left hand rule, according to the orientation of the overpass. The indexation of the generators associated to the overpasses should respect the following rule: $x_{1}, \ldots, x_{t}$ are the generators of those overpasses that end on the boundary points $+1, \ldots, +t$,  the generators $x_{t+1}, \ldots, x_{2t}$ correspond to $-1, \ldots, -t$, the generators $x_{2t+1}, \ldots, x_{2t+n}$ correspond to the overpasses before the arrows (considering the arrow orientation), the generators $x_{2t+n+1}, \ldots, x_{2t+2n}$ correspond to the overpasses after the arrows and finally $x_{2t+2n+1},\ldots, x_{r}$ are the remaining ones. Refer to Figure \ref{fig:gener} for an example. For the loops  $x_{i}$ with $i=1, \ldots, 2t$ and $i=2t+1, \ldots, 2t+2n$ we should add a sign $\epsilon_{i}=+1$ if the overpass, according to the orientation, enters from the boundary, $-1$ otherwise. Clearly $\epsilon_{i}=-\epsilon_{t+i},\  \forall i=1, \ldots, t $ and $\epsilon_{2t+i}=-\epsilon_{2t+n+i} \ \forall i=2t+1, \ldots, 2t+n $.

\begin{figure}[htb]
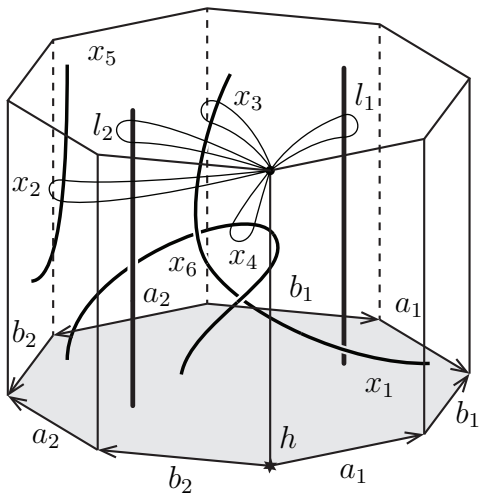

\centering
\begin{overpic}[page=43]{seif}
\put(36,0){$b_2$}\put(9,9){$a_2$}\put(5,29){$b_2$}\put(31,37){$a_2$}   
\put(60,38){$b_1$}\put(81,35){$a_1$}\put(93,13){$b_1$}\put(70,2){$a_1$} 
\put(75,19){$x_1$}\put(5,59){$x_2$}\put(49,76){$x_3$} 
\put(48,44.5){$x_4$}\put(20,85){$x_5$}\put(36,43){$x_6$}
\put(73,76){$l_1$}\put(21,70){$l_2$}
\put(58,8){$h$}  
\end{overpic}
\caption{The loops representing the group generators.}
\label{fig:gener}
\end{figure}

For each exceptional fiber $(\alpha_{i},\beta_{i})$, $i=1, \ldots, k$, let $l_{i}$ denote the generator associated to the fiber  and fix a path on the diagram, connecting $\ast$ to the fiber point.
Also for each arrow $v_{1}, \ldots, v_{n}$ we fix a path on the diagram connecting $\ast$ with the arrow.
These paths can not intersect each other and must follow the index order (before the fiber ones and then the arrow ones), as shown in Figure \ref{fig:generdiag}; we may also require that they are in general position with respect to the link projection.
If we read the overpass generators that we meet along the paths we compose the word $y_{i}$ for the fibers and the word $z_{j}$ for the arrows. In the case of Figure~\ref{fig:generdiag} we have $y_1=x_4$, $y_2=x_1$, and $z_1=x_1x_3^{-1}$.

From now on we assume $F$ is orientable. For the non-orientable case, see Theorem \ref{NorGroup}.

It is useful to label also the relations of the group presentation.
As usual, $W_{1}, \ldots, W_{s}$ denote the Wirtinger relations, that is a relation for every crossing as shown in Figure \ref{fig:wirt}.

\begin{figure}[htb]
\centering
\subfigure[$x_i x_k x_j^{-1} x_k^{-1} = 1$]{\begin{overpic}[page=7]{seif}
\put(52,6){$x_i$}\put(54,35){$x_j$}\put(30,29){$x_k$}
\end{overpic}\label{fig:wirt1}}\hspace{.5cm}
\subfigure[$x_i x_k^{-1} x_j^{-1} x_k = 1$]{\begin{overpic}[page=8]{seif}
\put(52,6){$x_k$}\put(55,32){$x_j$}\put(30,31){$x_i$}
\end{overpic}\label{fig:wirt2}}
\caption{Wirtinger relations.}
\label{fig:wirt}
\end{figure}

We introduce the inner automorphism (conjugation) of a group $G$ in order to simplify the group relations:
$$ \begin{array}{ccccc} \forall g \in G, & C(g): & G & \to & G, \\  &  & x & \mapsto & gxg^{-1}.\end{array}$$

For example, the Wirtinger relations may be rewritten as $x_{i}=C(x_{k})(x_{j})$ and $x_{i}=C(x_{k}^{-1})(x_{j})$, respectively.

The surface relation is
$$ F: \ \big( \prod_{i=1}^{g} [a_{i}, b_{i} ] \big) \big( \prod_{i=1}^{k} l_{i}^{-\beta_{i}} \big) \big( \prod_{i=2t+1}^{2t+n} x_{i}^{-\epsilon_{i}} \big)   =1. $$
For every $j=1, \ldots, g$ there are two relations $A_{j}$ and $B_{j}$, associated to the edges $a_{j}$ and $b_{j}$ of $F$:
\begin{align*}
A_{j}: &\  C \bigg( \big( \prod_{i=1}^{j-1} [a_{i},b_{i}] \big)^{-1} \bigg) \bigg(\prod_{ +i \textrm{ on } a_{j}} x_{i}^{\epsilon_{i}} \bigg) \cdot (a_{j} h a_{j}^{-1} h^{-\gamma_{j}} ) =1 \\
B_{j}: &\   C \bigg( a_{j}^{-1}   \big( \prod_{i=1}^{j-1} [a_{i},b_{i}] \big)^{-1} \bigg) \bigg( \prod_{ +i \textrm{ on } b_{j}} x_{i}^{\epsilon_{i}} \bigg) \cdot (b_{j} h b_{j}^{-1} h^{-\delta_{j}} )  =1 
\end{align*}
For every $j=1, 	\ldots, k$ there is a relation associated to the fiber $(\alpha_{j}, \beta_{j})$ 
$$CF_{j}: \  l_{j}^{-\beta_{j}}= C(h^{-1} y_{j})( l_{j}^{-\beta_{j}}).$$
For every $j=1, 	\ldots, n$, the relation associated to the arrow $v_{j}$ is 
$$CV_{j}: \ x_{2t+n+j}^{\epsilon_{2t+n+j}}= C(h^{-1} z_{j})( x_{2t+j}^{-\epsilon_{2t+j}} ). $$
The relation $CX_{j}$ for every $j=1, 	\ldots, t$ may have four different forms according to the following cases.  
The endpoint $+j$ belongs to the edge $a_{i}$ and $\gamma_{i}=+1$
$$CX_{j}: \ x_{t+j}^{\epsilon_{t+j}}= C \bigg( \big(\prod_{\iota=1}^{i} [a_{\iota},b_{\iota}] \big) b_{i} \big(\prod_{\iota=1}^{i-1} [a_{\iota},b_{\iota}] \big)^{-1} \bigg) \bigg( x_{j}^{-\epsilon_{j}} \bigg)  .$$
The endpoint $+j$ belongs to the edge $b_{i}$ and $\delta_{i}=+1$
$$ CX_{j}: \ x_{t+j}^{\epsilon_{t+j}}= C \bigg( \big(\prod_{\iota=1}^{i} [a_{\iota},b_{\iota}] \big)  a_{i}^{-1}  \big(\prod_{\iota=1}^{i-1} [a_{\iota},b_{\iota}] \big)^{-1} \bigg) \bigg( x_{j}^{-\epsilon_{j}} \bigg) .$$
The endpoint $+j$ belongs to the edge $a_{i}$ and $\gamma_{i}=-1$
\begin{multline*} 
CX_{j}: \ x_{t+j}^{\epsilon_{t+j}}= C \bigg( \big(\prod_{\iota=1}^{i} [a_{\iota},b_{\iota}] \big) b_{i} h  \big(\prod_{\iota=1}^{i-1} [a_{\iota},b_{\iota}] \big)^{-1} \big(\prod_{\substack{\iota<j \\ +\iota \textrm{ on } a_i}} x_{\iota}^{\epsilon_{\iota}} \big) \bigg) \bigg( x_{j}^{\epsilon_{j}} \bigg). 
 \end{multline*}
The endpoint $+j$ belongs to the edge $b_{i}$ and $\delta_{i}=-1$
$$CX_{j}: \ x_{t+j}^{\epsilon_{t+j}}= C \bigg( \big(\prod_{\iota=1}^{i} [a_{\iota},b_{\iota}] \big) h  a_{i}^{-1}  \big(\prod_{\iota=1}^{i-1} [a_{\iota},b_{\iota}] \big)^{-1}   \big(\prod_{\substack{\iota<j \\ +\iota \textrm{ on } b_i}} x_{\iota}^{\epsilon_{\iota}} \big) \bigg) \bigg( x_{j}^{\epsilon_{j}} \bigg).$$
Finally, for every $i=1, \ldots, k$, the relation $L_{j}$ of surgery is
 $$L_{j}: \ l_{j}^{\alpha_{j}} = y_{j}^{-1}h. $$

\begin{theorem}\label{OrGroup}
Given a link $L$ in a Seifert fibered space $M$ (assume that the base surface $F$ is orientable) with an arrow diagram satisfying the above condition, we get the following presentation for the group of the link:
\begin{align*}
\pi_{1}(M \smallsetminus  L, \ast) = \; & \langle x_{1}, \ldots , x_{r}, a_{1}, b_{1}, \ldots, a_{g}, b_{g}, h, l_{1}, \ldots, l_{k} \ | \\
& | \ W_{1}, \ldots ,W_{s}, F, A_{1}, B_{1},  \ldots, A_{g}, B_{g}, \\
& CF_{1}, \ldots, CF_{k}, CV_{1}, \ldots, CV_{n}, CX_{1}, \ldots, CX_{t},    L_{1}, \ldots, L_{k}  \rangle.
\end{align*}

\end{theorem}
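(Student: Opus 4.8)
The plan is to reconstruct the complement $M \smallsetminus L$ by the same three-step assembly that produces $M$ from its fundamental polygon $G$—first the trivial bundle over $G$, then the edge identifications giving the $S^1$-bundle over $F$, and finally the rational surgeries along the exceptional fibers—applying the Seifert--van Kampen theorem at each step and harvesting generators and relations exactly as they are forced by the gluings. A useful guiding check throughout is that killing all meridional generators, i.e. imposing $x_i=1$ for every overpass and identifying each $l_i$ with the fiber generator $q_i$ of the ambient manifold, must collapse the presentation of Theorem~\ref{OrGroup} to the presentation \eqref{gruppoFor} of $\pi_1(M)$; this both pins down the correct conjugating words and provides an a~posteriori verification.

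First I would treat the trivial bundle $G \times S^1$, which is a solid torus since $G$ is a disk, together with the interior arcs of $L$. Cutting along the section $G \times \{0\} \approx G \times \{1\}$ turns this into $G \times I$, a ball in which $L$ appears as a tangle, the arrows recording precisely the arcs meeting the cutting section. The complement of the tangle in the ball deformation retracts onto a wedge of circles, giving a free group on the overpass meridians $x_i$ oriented by the left-hand rule, while each crossing imposes the Wirtinger relation $W_j$ of Figure~\ref{fig:wirt}. Regluing $G \times \{1\}$ to $G \times \{0\}$ reintroduces the fiber class $h$ and, by van Kampen, forces each overpass ending at an arrow to be conjugate to the one resuming after it: travelling once around the fiber contributes $h^{-1}$ and the basepath word $z_j$ carries the identification back to $\ast$, which is exactly $CV_j$. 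The contribution of the boundary loop $\partial G \times \{\mathrm{pt}\}$ is recorded for the next step.

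Next I would perform the edge identifications turning $G \times S^1$ into the $S^1$-bundle over $F$. Gluing the annulus $a_j \times S^1$ to $a_j' \times S^1$ (respectively for $b_j$) amalgamates the two boundary pieces; van Kampen introduces the travel-across-the-edge generators $a_j,b_j$ and yields two families of relations. The relations $A_j$ and $B_j$ encode the monodromy of the fiber across the edge, namely $a_j h a_j^{-1} h^{-\gamma_j}$ (resp. with $\delta_j$), conjugated into position by the partial polygon word $\big(\prod_{\iota<j}[a_\iota,b_\iota]\big)^{-1}$ and corrected by the overpasses crossing that edge. The relations $CX_j$ identify the loops $x_j$ and $x_{t+j}$ at the identified boundary points $+j$ and $-j$; the four stated cases arise according to whether the point lies on an $a$- or a $b$-edge and whether the gluing sign is $+1$ or $-1$, the sign $-1$ being responsible for the extra factor of $h$ and for the switch from $x_j^{-\epsilon_j}$ to $x_j^{\epsilon_j}$ coming from the reflection on the $S^1$-component. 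Once all edges are identified the polygon $2$-cell is attached, and its boundary word, combined with the strand and fiber contributions accumulated above, becomes the surface relation $F$.

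Finally I would carry out the $(\alpha_j,\beta_j)$-surgeries. Removing a fibered neighborhood of each exceptional fiber and regluing a solid torus is a van Kampen gluing along a torus: the core of the new solid torus contributes $l_j$, the meridian disk imposes $L_j:\ l_j^{\alpha_j}=y_j^{-1}h$ (with $y_j$ transporting the fiber class to $\ast$), and the consistency of conjugating $l_j^{-\beta_j}$ around the fiber yields $CF_j$; assembling the three stages then reproduces the presentation of Theorem~\ref{OrGroup}. The hard part will not be the topology, which is a routine iterated use of van Kampen, but the bookkeeping of orientations and basepaths: checking that the conjugating prefixes $\big(\prod_{\iota=1}^{j}[a_\iota,b_\iota]\big)$, $b_i$, $a_i^{-1}$ and $h$ occur in exactly the stated positions, that each exponent $\epsilon_i$ tracks the in/out direction of its strand, and that the four cases of $CX_j$ are exhaustive and sign-consistent. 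I expect the reversed-fiber case $\gamma_i=-1$ (or $\delta_i=-1$), where the gluing reflects the $S^1$-component, to be the most error-prone, and I would guard against sign mistakes by repeatedly checking against the collapse to \eqref{gruppoFor}.
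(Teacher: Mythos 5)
Your plan is sound and rests on the same underlying idea as the paper's proof --- rebuild the complement from the fundamental polygon and harvest relations by iterated Seifert--van Kampen --- but the decomposition is organized differently. The paper does not glue the section, the edges, and the polygon $2$-cell in sequence: it removes the surgery disks first, forms $G_0=G\smallsetminus\{D_1,\dots,D_k\}$, and applies van Kampen \emph{once} to the two-piece decomposition of $P(G_0\times[0,1])\smallsetminus L$ into the interior (a ball minus a tangle, yielding the generators $x_i$, the hole generators $q_i$ and the Wirtinger relations) and a neighbourhood of the entire identified boundary $P(\partial G_0\times[0,1])\smallsetminus L$, whose group is read off from an explicit CW-structure (one $0$-cell; $1$-cells $a_i,b_i,h,\widetilde{q}_i,v_i,d_i$; $2$-cells for the punctured polygon and the lateral annuli). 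The intersection is a punctured sphere, so a single application of van Kampen followed by elimination of $\widetilde{q}_i,v_i,d_i$ produces $F$, $A_j$, $B_j$, $CV_j$, $CX_j$ and (after the fillings) $CF_j$ all at once; only the $(\alpha_j,\beta_j)$-surgeries are then handled one at a time, as in your third step. Two caveats about your sequential version. First, gluing $G\times\{0\}$ to $G\times\{1\}$, and likewise $a_i\times S^1$ to $a_i'\times S^1$, are self-identifications of a single connected space, so each such step requires the HNN/adjunction form of van Kampen rather than the amalgamated-product form; the paper's boundary-neighbourhood trick is precisely how it avoids this issue, and you should either invoke the correct version or take a collar of the glued locus as your second open set. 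Second, the entire content of the theorem is the exact conjugating prefixes in $A_j$, $B_j$, $CX_j$, $CV_j$, $CF_j$, which you defer to ``bookkeeping'': in the paper these drop out mechanically from the attaching words of the $2$-cells and from the change-of-basepoint words along the fixed paths to the arrows and exceptional points, and that computation is the part of the argument you would still have to supply before your outline becomes a proof.
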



\begin{remark}
When the genus of $F$ is zero, by applying a finite sequence of $\Omega_{9}^{O}$ moves, we can assume that the link has no boundary points. This produces a major simplification of the group presentation.
\end{remark}


In the case $F$ is non-orientable, consider the same generators except for the $b_{i}$-s that are missing in the boundary identification of $F$. The relations are modified as follows.
 
As usual, $W_{1}, \ldots, W_{s}$ denote the Wirtinger relations.
The surface relation becomes
$$ F:  \ \big( \prod_{i=1}^{g} a_{i}^{2} \big) \big( \prod_{i=1}^{k} l_{i}^{-\beta_{i}} \big) \big( \prod_{i=2t+1}^{2t+n} x_{i}^{-\epsilon_{i}} \big)   =1. $$
For every $j=1, \ldots, g$, the edge relation is
$$A_{j}: \  C \bigg( \big( \prod_{i=1}^{j-1} a_{i}^{2} \big)^{-1} \bigg) \bigg(\prod_{ +i \textrm{ on } a_{j}} x_{i}^{\epsilon_{i}} \bigg) \cdot (a_{j} h a_{j}^{-1} h^{-\gamma_{j}} ) =1 
   $$
For every $j=1, 	\ldots, k$, there is a fiber relation
$$CF_{j}: \  l_{j}^{-\beta_{j}}= C(h^{-1} y_{j})( l_{j}^{-\beta_{j}}).$$
For every $j=1, 	\ldots, n$, the relation associated to the arrow is 
$$CV_{j}: \ x_{2t+n+j}^{\epsilon_{2t+n+j}}= C(h^{-1} z_{j})( x_{2t+j}^{-\epsilon_{2t+j}} ). $$
The relation $CX_{j}$ for every $j=1, 	\ldots, t$ depends on the $i \in \{1, \ldots, g \}$ such that the endpoint $+j$ belongs to the edge $a_{i}$; the relation may have two different forms, according to $\gamma_{i}=\pm 1$.  
 If $\gamma_{i}=+1$ the relation is
$$CX_{j}: \ x_{t+j}^{\epsilon_{t+j}}= C \bigg( \big(\prod_{\iota=1}^{i-1} a_{\iota}^{2} \big) a_{i} \big(\prod_{\iota=1}^{i-1} a_{\iota}^{2} \big) \bigg) \bigg( x_{j}^{\epsilon_{j}} \bigg)  .$$
Otherwise
\begin{multline*} 
CX_{j}: \ x_{t+j}^{\epsilon_{t+j}}= C \bigg( \big(\prod_{\iota=1}^{i-1} a_{\iota}^{2} \big) a_{i} h  \big(\prod_{\iota=1}^{i-1} a_{\iota}^{2} \big)^{-1} \big(\prod_{\substack{\iota<j \\ +\iota \textrm{ on } a_i}} x_{\iota}^{\epsilon_{\iota}} \big) \bigg) \bigg( x_{j}^{-\epsilon_{j}} \bigg). 
 \end{multline*}
Finally, for every $i=1, \ldots, k$, the relation $L_{j}$ of surgery is
 $$L_{j}: \ l_{j}^{\alpha_{j}} = y_{j}^{-1}h. $$

\begin{theorem}\label{NorGroup}
Given a link $L$ in a Seifert fibered space $M$ (assume that the base surface $F$ is non-orientable) with an arrow diagram satisfying the above condition, we get the following presentation for the group of the link:
\begin{align*}
\pi_{1}( &M \smallsetminus  L, \ast)= \langle x_{1}, \ldots , x_{r}, l_{1}, \ldots, l_{k}, a_{1},  \ldots, a_{g}, h \ | \ W_{1}, \ldots ,W_{s}, F,\\
 &A_{1}, \ldots, A_{g}, CF_{1}, \ldots, CF_{k}, CV_{1}, \ldots, CV_{n}, CX_{1}, \ldots, CX_{t},    L_{1}, \ldots, L_{k}  \rangle.
\end{align*}

\end{theorem}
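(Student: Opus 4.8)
The plan is to run the same van Kampen computation that establishes Theorem~\ref{OrGroup}, changing only the data attached to the boundary identification of the fundamental polygon. The essential point is that the orientable and non-orientable cases differ solely in the edge word of $G$: the string $[a_1,b_1]\cdots[a_g,b_g]$ is replaced by $a_1^2\cdots a_g^2$, the generators $b_i$ disappear, and each edge $a_i$ is now glued to \emph{itself} rather than to $a_i^{-1}$. Consequently every relation produced in the interior of $G$ is unchanged, and the work is to recompute the edge relations and to check that the surface relation degenerates to the stated form.

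First I would present the complement of the ``unglued'' picture. Cutting $M$ along the section $F\times\{0\}$ and along the meridional disks of the surgery solid tori, and lifting to $G$, exhibits $M\smallsetminus L$ as the solid torus $G\times S^1$ with the link arcs deleted, subject to the edge identifications and the surgery regluings imposed afterwards as gluing data. The fundamental group of this solid torus minus the broken diagram is free on the overpass loops $x_1,\dots,x_r$ and the fiber $h$, with one Wirtinger relation $W_j$ per crossing as in Figure~\ref{fig:wirt}; this input uses nothing about the orientability of $F$ and is identical to the orientable case.

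Next I would glue back the remaining structure one feature at a time, adding a generator for each new loop and the relation recording its conjugation action. The arrows, which mark where $L$ pierces $F\times\{0\}$, give the relations $CV_j$: crossing the section once amounts to conjugating the incoming overpass by $h^{-1}z_j$, with $z_j$ the word read along the fixed path from $\ast$ to the arrow. The exceptional fiber $(\alpha_j,\beta_j)$ is handled exactly as before: excising a fibered neighborhood of the fiber and regluing a solid torus whose meridian is attached along the $(\alpha_j,\beta_j)$-slope introduces $l_j$ together with the meridian relation $L_j\colon l_j^{\alpha_j}=y_j^{-1}h$ and the fiber relation $CF_j$. Since the arrows and the surgeries live in the interior of $G$, these contributions are insensitive to the orientability of $F$ and carry over verbatim.

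The delicate step, and the only genuine divergence from Theorem~\ref{OrGroup}, is the edge relation $A_j$ and the boundary-crossing relation $CX_j$, which record the holonomy of the bundle as an overpass is dragged across the edge $a_i$. Because $a_i$ is now identified with itself, the conjugating word is assembled from the partial products $\prod_{\iota<i}a_\iota^2$ together with a \emph{single} factor $a_i$, rather than from commutators; this yields $A_j$ and, when $\gamma_i=+1$, the first form of $CX_j$. When $\gamma_i=-1$ the fiber is reglued by a reflection, so dragging the arc across $a_i$ reverses the $S^1$-direction, and it is precisely this reversal that inserts the extra factor $h$ and reverses the sign of the exponent on $x_j$ (from $\epsilon_j$ to $-\epsilon_j$) in the second form of $CX_j$. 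I expect this reflection bookkeeping to be the main obstacle: one must verify that the placement of $h$ and the signs $\epsilon_i$ stay consistent with the left-hand-rule orientation chosen for the loops $x_i$. Once this single edge computation is checked, collecting $W_j$, $F$, $A_j$, $CF_j$, $CV_j$, $CX_j$ and $L_j$ gives the claimed presentation; as a consistency check, setting $L=\emptyset$ collapses it to the fundamental group \eqref{gruppoFnor} of $M$ itself.
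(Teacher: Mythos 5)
Your proposal follows essentially the same route as the paper: the paper proves only the orientable case in detail (a Seifert--Van Kampen decomposition into the Wirtinger presentation of the interior of $G_0\times[0,1]$ minus $L$, a CW-complex model of the identified boundary, and one solid-torus gluing per surgery) and explicitly defers the non-orientable case with ``the proof of the unorientable case can be made in the same fashion.'' Your argument is precisely that adaptation, correctly isolating the replacement of $[a_1,b_1]\cdots[a_g,b_g]$ by $a_1^2\cdots a_g^2$, the disappearance of the $b_i$, and the self-identification of each edge $a_i$ (with the $\gamma_i=-1$ reflection inserting the extra $h$ and the sign change in $CX_j$) as the only points where the computation differs, so it is correct.
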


We will prove only the orientable case, the proof of the unorientable case can be made in the same fashion.

\begin{proof}[Proof of Theorem \ref{OrGroup}]

Let $G$ be the fundamental polygon of the surface $F$, base of the Seifert fibered space $M$. Let $D_{1}, \ldots, D_{k}$ be $k$ disjoint disks on $\textrm{int}\,G$, corresponding to the surgeries. Denote by $G_{0}$ the space $G \smallsetminus \{ D_{1}, \ldots, D_{k} \}$.
Consider the space $(G_{0} \times [0,1])/_\sim$, where $\sim$ is the equivalence relation that identifies the points of $G_{0} \times \{ 0 \}$ to the corresponding points of $G_{0} \times \{ 1 \}$, moreover the relation $\sim$ identifies the points of the lateral surface $\partial G \times [0,1]$, according to the boundary labels $a_{i}$ or $b_{i}$ and the signs $\gamma_{i}$ or $\delta_{i}$.
Let $P : (G_{0} \times [0,1]) \to (G_{0} \times [0,1])/_\sim$ be the quotient map.
The Seifert fibered space $M$ is the result of the suitable $(\alpha_{i},\beta_{i})$-fillings on $P(G_{0} \times [0,1]) $.

Consider the link $L \subset M$. Up to small isotopies we can assume that $L$ is all contained inside $P(G_{0} \times [0,1])$, and can be represented as a system of arcs inside $G_{0} \times [0,1]$, that may end on the boundary. 

The first goal is to compute $\pi_{1}(P(G_{0} \times [0,1])  \smallsetminus L, \ast)$, where $\ast$ is the basepoint on $G$ fixed as in Figure \ref{fig:gener}. In order to get a presentation of this group by the Seifert-Van Kampen theorem, we split $P(G_{0} \times [0,1])  \smallsetminus L$ into two parts.  The first part is the tabular neighbourhood $N(P(\partial G_{0} \times [0,1])  \smallsetminus L)$ of $P(\partial G_{0} \times [0,1])  \smallsetminus L)$ and the second part is the ``internal'' part $\textrm{int}(G_{0} \times [0,1])  \smallsetminus L$. Note that the first part deformation retracts to $P(\partial G_{0} \times [0,1])  \smallsetminus L)$. The intersection between the two parts deformation retracts to $(\partial G_{0} \times [0,1])  \smallsetminus L$.

As in the Wirtinger theorem for knots in $S^{3}$, the fundamental group of $\textrm{int}(G_{0} \times [0,1])  \smallsetminus L$ can be presented
by the generators $x_{1}, \ldots, x_{r}$ associated to the overpasses, by the generators $q_{1}, \ldots, q_{k}$ corresponding to the holes for surgeries and by the Wirtinger relations:
$$ \pi_{1}(\textrm{int}(G_{0} \times [0,1])  \smallsetminus L, \ast)=\langle x_{1}, \ldots, x_{r}, q_{1}, \ldots, q_{k}  \ | \  W_{1}, \ldots, W_{s} \ \rangle. $$

The space  $P(\partial G_{0} \times [0,1])  \smallsetminus L$ can be described by the following CW-complex:
\begin{description}
\item[0-complex] the base point $\ast$;
\item[1-complexes] the loops $a_{1}, b_{1}, \ldots, a_{g}, b_{g}$ of the surface $F$, the loop $h$ of the fibration $S^{1}$, the loops $\widetilde{q_{1}}, \ldots, \widetilde{q_{k}}$ for the surgery holes, the loops $v_{1}, \ldots, v_{n}$ corresponding to the arrows on the diagram, that means we have $n$ holes in $P(\partial G_{0} \times \{0,1\})$ created by $L$, the loops $d_{1}, \ldots, d_{t}$ on the lateral surface $P(\partial G \times [0,1])$ corresponding to the holes created by $L$;
\item[2-complexes] there is one 2-complex that represents the surface \hbox{$(G_{0} \times \{0,1\}) \smallsetminus L $} and other $2g$ 2-complexes corresponding to the surfaces $A_{1}, B_{1}, \ldots, A_{g}, B_{g}$ that are parts of the lateral surface $P(\partial G \times [0,1]) \smallsetminus L$.
\end{description}
As a consequence, since the maximal tree is trivial, each $1$-complex is a generator and each $2$-complex is a relation. 
\begin{align*}
 \pi_{1}&(P(\partial G_{0} \times [0,1])  \smallsetminus L, \ast) =  \langle \ a_{1}, b_{1}, \ldots, a_{g}, b_{g}, h,\widetilde{q_{1}}, \ldots, \widetilde{q_{k}}, \\
  & v_{1}, \ldots, v_{n}, d_{1}, \ldots, d_{t} \ | \ \big(\prod_{i=1}^{g}[a_{i}, b_{i}]\big) \big(\prod_{i=1}^{k}\widetilde{q_{i}}\big) \big(\prod_{i=1}^{n}v_{i}\big) =1,  \\
& \big(\prod_{i \ | \ d_{i} \in A_{j}} d_{i} \big) a_{j}h a_{j}^{-1} h^{-\gamma_{j}}=1, \big(\prod_{i \ | \ d_{i} \in B_{j}} d_{i} \big) b_{j}h b_{j}^{-1} h^{-\delta_{j}}=1, \ \forall j \in \{ 1, \ldots, g \} \, \rangle 
\end{align*}

The intersecting surface $(\partial G_{0} \times [0,1])  \smallsetminus L$ is a sphere with holes, hence its fundamental group is free and we label differently the generators according to the hole type: $q_{1}, \ldots, ,q_{k}$ (resp. $q_{k+1}, \ldots, ,q_{2k}$) are the surgery holes in $G \times \{ 1 \}$ (resp. $G \times \{ 0 \}$), $v_{1}, \ldots, v_{n}$  (resp. $v_{n+1}, \ldots, v_{2n}$) correspond to the arrow holes in  $G \times \{ 1 \}$ (resp. $G \times \{ 0 \}$) and $d_{1}, \ldots, d_{2t}$ correspond to the lateral surface holes, indexed according to the corresponding overpasses indexation. As a consequence, $ \pi_{1}((\partial G_{0} \times [0,1])  \smallsetminus L, \ast)$ is generated by $  q_{1}, \ldots, q_{2k}, v_{1},\ldots, v_{2n}, d_{1}, \ldots, d_{2t}$. 

By applying the Seifert-Van Kampen theorem we get the presentation: 
\begin{align*}
 \pi_{1}&(P(G_{0} \times [0,1])  \smallsetminus L, \ast) =\langle \ x_{1}, \ldots, x_{r}, q_{1}, \ldots, q_{k},  a_{1}, b_{1}, \ldots, a_{g}, b_{g}, h, \\
& \widetilde{q_{1}}, \ldots, \widetilde{q_{k}}, v_{1}, \ldots, v_{n}, d_{1}, \ldots, d_{t} \ | \ W_{1}, \ldots, W_{s}, \\
  & \big(\prod_{i=1}^{g}[a_{i}, b_{i}]\big) \big(\prod_{i=1}^{k}\widetilde{q_{i}}\big) \big(\prod_{i=1}^{n}v_{i}\big) =1,  \\
   &\big(\prod_{i \ | \ d_{i} \in A_{j}} d_{i} \big) a_{j}h a_{j}^{-1} h^{-\gamma_{j}}=1, \\
   & \big(\prod_{i \ | \ d_{i} \in B_{j}} d_{i} \big) b_{j}h b_{j}^{-1} h^{-\delta_{j}}=1, \ \forall j \in \{ 1, \ldots, g \} , \\
    & q_{j}=\widetilde{q_{j}}, q_{j}= C(h^{-1} y_{j}) (\widetilde{q_{j}}), \forall j \in \{ 1, \ldots, k \} , \\
      & v_{j}=x_{2t+j}^{-\epsilon_{2t+j}}, v_{j}= C(h^{-1} z_{j}) (x_{2t+n+j}^{\epsilon_{2t+n+j}} ) , \forall j \in \{ 1, \ldots, n \} , \\
  & x_{j}^{\epsilon_{j}}= C \bigg( \big(\prod_{\iota=1}^{i-1} [a_{\iota},b_{\iota}] \big) \bigg) \bigg(  d_{j}\bigg)  ,  \forall j \in \{ 1, \ldots, t \} \textrm{ s.t. } \exists i \textrm{ with } x_{j} \in A_{i} , \\
   &x_{j}^{\epsilon_{j}}= C \bigg( \big(\prod_{\iota=1}^{i-1} [a_{\iota},b_{\iota}] \big) a_{i} \bigg) \bigg(  d_{j}\bigg) , \forall j \in \{ 1, \ldots, t \} \textrm{ s.t. } \exists i \textrm{ with }  x_{j} \in B_{i} , \\
  & x_{t+j}^{\epsilon_{t+j}}= C \bigg( \big(\prod_{\iota=1}^{i} [a_{\iota},b_{\iota}] \big) b_{i}  \bigg) \bigg(d_{j}^{-1} \bigg),  \forall j \in \{ 1, \ldots, t \} \textrm{ s.t. } \exists i \textrm{ with }  x_{j} \in A_{i} \textrm{ and } \gamma_{i}=1 ,\\
   &x_{t+j}^{\epsilon_{t+j}}= C \bigg( \big(\prod_{\iota=1}^{i} [a_{\iota},b_{\iota}] \big) \bigg) \bigg(d_{j}^{-1} \bigg) ,  \forall j \in \{ 1, \ldots, t \} \textrm{ s.t. } \exists i \textrm{ with }  x_{j} \in B_{i} \textrm{ and } \delta_{i}=1, \\
  & x_{t+j}^{\epsilon_{t+j}}= C \bigg( \big(\prod_{\iota=1}^{i} [a_{\iota},b_{\iota}] \big) b_{i} h \big(\prod_{\iota < j | d_{\iota} \in A_i} d_{\iota} \big) \bigg) \bigg( d_{j} \bigg), \\
 &\forall j \in \{ 1, \ldots, t \} \textrm{ s.t. } \exists i \textrm{ with }  x_{j} \in A_{i} \textrm{ and } \gamma_{i}=-1 ,\\
   &x_{t+j}^{\epsilon_{t+j}}= C \bigg( \big(\prod_{\iota=1}^{i} [a_{\iota},b_{\iota}] \big) h \big(\prod_{\iota<j | d_{\iota} \in B_i} d_{\iota} \big) \bigg) \bigg( d_{j} \bigg)  , \\
 &\forall j \in \{ 1, \ldots, t \} \textrm{ s.t. } \exists i \textrm{ with }  x_{j} \in B_{i} \textrm{ and } \delta_{i}=-1  \ \rangle. 
 \end{align*}

After the deletion of the generators $ \widetilde{q_{1}}, \ldots, \widetilde{q_{k}} , v_{1}, \ldots, v_{n}, d_{1}, \ldots, d_{t}$, the presentation becomes:
\begin{align*}
 \pi_{1}&(P(G_{0} \times [0,1])  \smallsetminus L, \ast) =\langle \ x_{1}, \ldots, x_{r}, q_{1}, \ldots, q_{k}, a_{1}, b_{1}, \ldots, a_{g}, b_{g}, h \ | \\
 &  | \ W_{1}, \ldots, W_{s}, \\
  & \big(\prod_{i=1}^{g}[a_{i}, b_{i}]\big) \big(\prod_{i=1}^{k}q_{i}\big) \big(\prod_{i=2t+1}^{2t+n}x_{i}^{-\epsilon_{i}}\big) =1, \\ 
   &C\bigg(  \big(\prod_{\iota=1}^{j-1} [a_{\iota},b_{\iota}] \big)^{-1} \bigg) \bigg(\prod_{i \ | \ x_{i} \in A_{j}} x_{i}^{\epsilon_{i}} \bigg)  a_{j}h a_{j}^{-1} h^{-\gamma_{j}}=1,\ \forall j \in \{ 1, \ldots, g \} ,\\
   &C \bigg(  a_{j}^{-1}  \big(\prod_{\iota=1}^{j-1} [a_{\iota},b_{\iota}] \big)^{-1} \bigg) \bigg(\prod_{i \ | \ x_{i} \in B_{j}} x_{i}^{\epsilon_{i}} \bigg) a_{j}   b_{j}h b_{j}^{-1} h^{-\delta_{j}}=1, \ \forall j \in \{ 1, \ldots, g \} , \\
    &q_{j}= C(h^{-1} y_{j}) (q_{j}) , \forall j \in \{ 1, \ldots, k \} , \\
     & x_{2t+j}^{-\epsilon_{2t+j}}= C(h^{-1} z_{j}) (x_{2t+n+j}^{\epsilon_{2t+n+j}}), \forall j \in \{ 1, \ldots, n \} , \\
 &x_{t+j}^{\epsilon_{t+j}}= C \bigg( \big(\prod_{\iota=1}^{i} [a_{\iota},b_{\iota}] \big) b_{i} \big(\prod_{\iota=1}^{i-1} [a_{\iota},b_{\iota}] \big)^{-1} \bigg) \bigg( x_{j}^{-\epsilon_{j}} \bigg)  , \\
 &\forall j \in \{ 1, \ldots, t \} \textrm{ s.t. }\exists i \textrm{ with } x_{j} \in A_{i} \textrm{ and } \gamma_{i}=1 ,\\
&x_{t+j}^{\epsilon_{t+j}}= C \bigg( \big(\prod_{\iota=1}^{i} [a_{\iota},b_{\iota}] \big)  a_{i}^{-1}  \big(\prod_{\iota=1}^{i-1} [a_{\iota},b_{\iota}] \big)^{-1} \bigg) \bigg( x_{j}^{-\epsilon_{j}} \bigg)  , \\
 &\forall j \in \{ 1, \ldots, t \} \textrm{ s.t. }\exists i \textrm{ with } x_{j} \in B_{i} \textrm{ and } \delta_{i}=1, \\
  &x_{t+j}^{\epsilon_{t+j}}= C \bigg( \big(\prod_{\iota=1}^{i} [a_{\iota},b_{\iota}] \big) b_{i} h  \big(\prod_{\iota=1}^{i-1} [a_{\iota},b_{\iota}] \big)^{-1} \big(\prod_{\substack{\iota<j \\ +\iota \textrm{ on } a_i}} x_{\iota}^{\epsilon_{\iota}} \big) \bigg) \bigg( x_{j}^{\epsilon_{j}} \bigg) , \\
 &\forall j \in \{ 1, \ldots, t \} \textrm{ s.t. }\exists i \textrm{ with } x_{j} \in A_{i} \textrm{ and } \gamma_{i}=-1 ,\\
&x_{t+j}^{\epsilon_{t+j}}= C \bigg( \big(\prod_{\iota=1}^{i} [a_{\iota},b_{\iota}] \big) h  a_{i}^{-1}  \big(\prod_{\iota=1}^{i-1} [a_{\iota},b_{\iota}] \big)^{-1}   \big(\prod_{\substack{\iota<j \\ +\iota \textrm{ on } b_i}} x_{\iota}^{\epsilon_{\iota}} \big) \bigg) \bigg( x_{j}^{\epsilon_{j}} \bigg) , \\
 &\forall j \in \{ 1, \ldots, t \} \textrm{ s.t. }\exists i \textrm{ with } x_{j} \in B_{i} \textrm{ and } \delta_{i}=-1  \ \rangle. 
 \end{align*}

When we perform an $(\alpha_{1},\beta_{1})$-filling on $D_{1} \times [0,1]$ we can find the fundamental group of the result again through the Seifert-Van Kampen theorem.
The first space is the one above, the second space is a solid torus, whose fundamental group is presented by $  \langle \ l_{1} \ \rangle $, and the splitting surface is a torus, whose fundamental group is presented by $\langle \ l_{1}, m_{1}  \ \rangle $.
This operation can be done for each $k$ surgery, and at every step we apply the Seifert-Van Kampen theorem. The result is the previous group with the addition of the generators $l_{1}, \ldots, l_{k}$ and the relations $q_{j}=l_{j}^{-\beta_{j}}, l_{j}^{\alpha_{j}}=y_{j}^{-1} h ,  \forall j \in \{ 1, \ldots, k \}$.

After deleting the $q_{1}, \ldots, q_{k}$ generators with relations $ q_{j}=l_{j}^{-\beta_{j}} $, we get the desired result:
\begin{align*}
 \pi_{1}&(M  \smallsetminus L, \ast) =\langle \ x_{1}, \ldots, x_{r}, 
a_{1}, b_{1}, \ldots, a_{g}, b_{g}, h, l_{1}, \ldots, l_{k} \ | \\
 &| \ W_{1}, \ldots, W_{s}, 
 \big(\prod_{i=1}^{g}[a_{i}, b_{i}]\big) \big(\prod_{i=1}^{k}l_{i}^{-\beta_{i}}\big) \big(\prod_{i=2t+1}^{2t+n}x_{i}^{-\epsilon_{i}}\big) =1,  \\
  &C  \bigg( \big(\prod_{\iota=1}^{j-1} [a_{\iota},b_{\iota}] \big)^{-1} \bigg) \bigg(\prod_{i \ | \ x_{i} \in A_{j}} x_{i}^{\epsilon_{i}} \bigg) a_{j}h a_{j}^{-1} h^{-\gamma_{j}}=1,\ \forall j \in \{ 1, \ldots, g \} ,\\
    &C \bigg( a_{j}^{-1}  \big(\prod_{\iota=1}^{j-1} [a_{\iota},b_{\iota}] \big)^{-1}  \bigg) \bigg(\prod_{i \ | \ x_{i} \in B_{j}} x_{i}^{\epsilon_{i}} \bigg)   b_{j}h b_{j}^{-1} h^{-\delta_{j}}=1, \ \forall j \in \{ 1, \ldots, g \} , \\
 &l_{i}^{-\beta_{i}}= C(h^{-1} y_{j})( l_{i}^{-\beta_{i}} ) , \forall j \in \{ 1, \ldots, k \} , \\
 &x_{2t+n+j}^{\epsilon_{2t+n+j}}   = C(h^{-1} z_{j})(x_{2t+j}^{-\epsilon_{2t+j}}), \forall j \in \{ 1, \ldots, n \} , \\
 &x_{t+j}^{\epsilon_{t+j}}= C \bigg( \big(\prod_{\iota=1}^{i} [a_{\iota},b_{\iota}] \big) b_{i} \big(\prod_{\iota=1}^{i-1} [a_{\iota},b_{\iota}] \big)^{-1} \bigg) \bigg( x_{j}^{-\epsilon_{j}} \bigg)  , \\
 &\forall j \in \{ 1, \ldots, t \} \textrm{ s.t. }\exists i \textrm{ with } x_{j} \in A_{i} \textrm{ and } \gamma_{i}=1 ,\\
&x_{t+j}^{\epsilon_{t+j}}= C \bigg( \big(\prod_{\iota=1}^{i} [a_{\iota},b_{\iota}] \big)  a_{i}^{-1}  \big(\prod_{\iota=1}^{i-1} [a_{\iota},b_{\iota}] \big)^{-1} \bigg) \bigg( x_{j}^{-\epsilon_{j}} \bigg)  , \\
 &\forall j \in \{ 1, \ldots, t \} \textrm{ s.t. }\exists i \textrm{ with } x_{j} \in B_{i} \textrm{ and } \delta_{i}=1, \\
  &x_{t+j}^{\epsilon_{t+j}}= C \bigg( \big(\prod_{\iota=1}^{i} [a_{\iota},b_{\iota}] \big) b_{i} h  \big(\prod_{\iota=1}^{i-1} [a_{\iota},b_{\iota}] \big)^{-1} \big(\prod_{\substack{\iota<j \\ +\iota \textrm{ on } a_i}} x_{\iota}^{\epsilon_{\iota}} \big) \bigg) \bigg( x_{j}^{\epsilon_{j}} \bigg) , \\
 &\forall j \in \{ 1, \ldots, t \} \textrm{ s.t. }\exists i \textrm{ with } x_{j} \in A_{i} \textrm{ and } \gamma_{i}=-1 ,\\
&x_{t+j}^{\epsilon_{t+j}}= C \bigg( \big(\prod_{\iota=1}^{i} [a_{\iota},b_{\iota}] \big) h  a_{i}^{-1}  \big(\prod_{\iota=1}^{i-1} [a_{\iota},b_{\iota}] \big)^{-1}   \big(\prod_{\substack{\iota<j \\ +\iota \textrm{ on } b_i}} x_{\iota}^{\epsilon_{\iota}} \big) \bigg) \bigg( x_{j}^{\epsilon_{j}} \bigg) , \\
 &\forall j \in \{ 1, \ldots, t \} \textrm{ s.t. }\exists i \textrm{ with } x_{j} \in B_{i} \textrm{ and } \delta_{i}=-1 ,\\
&l_{j}^{\alpha_{j}}=y_{j}^{-1} h,  \forall j \in \{ 1, \ldots, k \} \ \rangle. 
\end{align*}

The description of the generators and the relations from the arrow diagram of the link is straightforward. 
\end{proof}

\begin{remark}
The group $ \pi_{1}((\partial G_{0} \times [0,1])  \smallsetminus L, \ast)$ is free with a generator less than the set used in the proof, hence it is possible do delete one of the relations between $CF_{i}$, $CV_{i}$ or $CX_{i}$.
\end{remark}


\section{The first homology group of links in Seifert Manifolds}\label{linkhomology}

\paragraph{Homology classes of knots in Seifert fibered space}
Let us recall a presentation of the first homology group of the Seifert fibered space itself, different from the ones that can be recovered from Equations \ref{gruppoFor} and \ref{gruppoFnor}. Namely, the following two presentations can be obtained from the fundamental groups of Theorem \ref{OrGroup} and \ref{NorGroup} by assuming $L= \emptyset$. If $F$ is orientable, 
\begin{align*}
H_{1}(M)= \langle \ & a_{1}, b_{1}, \ldots, a_{g}, b_{g}, h, l_{1}, \ldots, l_{k}  \  | \ l_{1}^{-\beta_{1}}\cdots l_{k}^{-\beta_{k}}=1, \\
&h^{1-\gamma_{j}}=1,  h^{1-\delta_{j}}=1, \forall j = 1, \ldots, g,   l_{i}^{\alpha_{i}}=h, \forall i = 1, \ldots, k \   \rangle,
\end{align*}
 whereas if $F$ is non-orientable, 
 \begin{align*}
 H_{1}(M)=\langle \ & a_{1},  \ldots, a_{g}, h, l_{1}, \ldots, l_{k} \ | \   a_{1}^{2}\cdots a_{g}^{2} l_{1}^{-\beta_{1}}\cdots l_{k}^{-\beta_{k}}=1,\\
 & h^{1-\gamma_{j}}=1, \forall j = 1, \ldots, g,  l_{i}^{\alpha_{i}}=h, \forall i = 1, \ldots, k \ \rangle.
 \end{align*}
Hence the homology group usually has a torsion part. The complete determination of the torsion part is quite complicated and is treated in \cite{BLPZ, BH}.

The homology class $[K] \in H_{1}(M)$ of a knot $K \subset M$ is an isotopy invariant. We can determine the homology class of a knot directly from its diagram. 

\begin{lemma}\label{homologyclass}
The coefficients $\eta_{a,1}, \ldots, \eta_{a,g}, \eta_{b,1}, \ldots, \eta_{b,g}, \eta_{h}, \eta_{l,1}, \ldots, \eta_{l,k}$ that determine the homology class $\alpha_{1}^{\eta_{a,1}} \cdots \alpha_{g}^{\eta_{a,1}} \beta_{1}^{\eta_{b,1}} \cdots   \beta_{g}^{\eta_{b,g}}  h^{\eta_{h}}  l_{1}^{\eta_{l,1}}  \cdots  l_{k}^{\eta_{l,k}} \in H_{1}(M)$ of the knot can be found by the following formulas:
$$ \eta_{a,j}=\sum_{\substack{k=1 \\ +k \textrm{ on } a_{j}}}^{t} \epsilon_{k},  
\quad \quad  
\eta_{b,j}=\sum_{\substack{k=1 \\ +k \textrm{ on } b_{j}}}^{t} \epsilon_{k},  
\quad \quad  
\eta_{h}=\sum_{k=1}^{n} \epsilon_{2t+k}, \quad \quad \forall j=1, \ldots, g.$$
Finally, by substituting all overpasses with a single generator, the word $y_{j}$ becomes a power of this generator; this power is $\eta_{l,j}$ for every $j=1, \ldots, k$; alternatively, $\eta_{l,j}$ is the winding number of the knot curve around the dot on the diagram representing the $(\alpha_{j},\beta_{j})$-surgery.
\end{lemma}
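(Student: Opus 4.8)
The plan is to represent $[K]$ by a parallel push-off (longitude) $\lambda$ of $K$, which lies in $M\smallsetminus K$, and to compute its image under the inclusion-induced map $H_1(M\smallsetminus K)\to H_1(M)$; since $\lambda$ cobounds an annulus with $K$ in $M$, this image is exactly $[K]$. The key simplification is that a meridian of $K$ bounds a meridional disk in $M$, so every overpass generator $x_1,\dots,x_r$ dies in $H_1(M)$. Consequently, in computing $[K]$ we may discard all meridional contributions (in particular the framing of $\lambda$ becomes irrelevant) and keep track only of the structural generators $a_j,b_j,h,l_j$ inherited from $\pi_1(M)$ that $K$ accumulates as it is traversed once. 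It therefore suffices to record, generator by generator, the net structural letter picked up along $K$.

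First I would treat the contributions that do not involve the fillings. Viewing the diagram in the fundamental polygon $G$, the class $a_j$ (resp.\ $b_j$) is dual to the edge $a_j$ (resp.\ $b_j$), so each passage of $K$ across that edge contributes $a_j^{\pm1}$, the sign being precisely the label $\epsilon_k$ attached to the overpass meeting the boundary at $+k$; summing over the relevant boundary points yields $\eta_{a,j}=\sum_{+k\text{ on }a_j}\epsilon_k$ and $\eta_{b,j}=\sum_{+k\text{ on }b_j}\epsilon_k$. Likewise each arrow is a point where $K$ runs once along the fiber $h$, with orientation sign $\epsilon_{2t+k}$, so the $h$-exponent is $\eta_h=\sum_{k=1}^{n}\epsilon_{2t+k}$. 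These two computations are consistent with, and can be checked against, the abelianizations of the edge relations $A_j,B_j$ and the arrow relations $CV_j$ in Theorem~\ref{OrGroup} once every $x_i$ is set equal to a single meridian class and then killed.

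The remaining step, the exceptional-fiber exponent $\eta_{l,j}$, is where I expect the main difficulty, because over the surgery point the projection $M\to F$ is not a fibration and the naive base picture breaks down. Here the plan is to work in the complement of the fillings first: the winding number of $K$ about the $j$-th exceptional point equals the signed number of overpasses met by the chosen path from $\ast$ to that point, which is exactly the word $y_j$ after collapsing every overpass to the single class $x$, so $y_j\mapsto x^{\eta_{l,j}}$. I would then transport this count through the Seifert-Van Kampen filling steps of the proof of Theorem~\ref{OrGroup}, using the relations $q_j=l_j^{-\beta_j}$ and $L_j:\,l_j^{\alpha_j}=y_j^{-1}h$ that tie the boundary loop of the $j$-th hole to the core $l_j$, in order to read off the $l_j$-coordinate of $[K]$. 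The delicate point is to verify that this conversion, together with the conjugating prefixes $C(h^{-1}y_j)$ appearing in $CF_j$, abelianizes consistently and is independent of the chosen path modulo the defining relations of $H_1(M)$; once this is settled, assembling the four families of exponents produces the asserted class $a_1^{\eta_{a,1}}\cdots b_g^{\eta_{b,g}}h^{\eta_h}l_1^{\eta_{l,1}}\cdots l_k^{\eta_{l,k}}$.
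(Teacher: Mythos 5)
Your overall strategy --- kill the meridians $x_1,\dots,x_r$ in $H_1(M)$ and record only the structural letters $a_j,b_j,h,l_j$ that $K$ accumulates --- is sound, and it is essentially what the paper has in mind when it disposes of the proof in one line as an easy generalization of \cite{CMM}. (The push-off $\lambda$ is an unnecessary detour: since every $x_i$ dies in $H_1(M)$, you may as well read $[K]$ off directly from $K$ written as a based loop.) But the two places where actual work is required are exactly the two places you wave at. First, the edge contribution: you justify $\eta_{a,j}=\sum_{+k\text{ on }a_j}\epsilon_k$ by saying ``the class $a_j$ is dual to the edge $a_j$.'' In the paper's conventions $a_j$ \emph{is} the edge $a_j$ viewed as a loop in $F$, and a transverse crossing of that edge contributes a conjugate of $b_j^{\pm1}$, not of $a_j^{\pm1}$: this is visible in the paper's own relations $CX_j$, where the conjugating word attached to a boundary point on $a_i$ is built from $b_i$, and it is what the intersection pairing $\gamma\cdot a_j=-\eta_{b,j}$, $\gamma\cdot b_j=\eta_{a,j}$ predicts. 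So you must either make the intersection-pairing argument explicit (and then check that it lands on the stated formula rather than on the $a\leftrightarrow b$--swapped one) or concede that the one-line ``duality'' justification assumes what is to be proved.

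Second, and more seriously, the exceptional-fiber exponent is only a plan, and the plan as described does not produce the claimed exponent. The winding number $w_j$ of $K$ about the $j$-th exceptional point is the coefficient of $q_j=\partial D_j$ in $H_1$ of the unfilled space, and the gluing relations you propose to ``transport through'' send $q_j\mapsto l_j^{-\beta_j}$; carried out honestly this yields $l_j^{-\beta_j w_j}$, not $l_j^{w_j}$. A concrete test: take $g=0$, $k=1$ and let $K$ be a small circle around the dot with no arrows, so $w_1=1$ and $\eta_h=0$. Then $K\simeq q_1$ bounds the punctured section, hence $[K]=0$ in $H_1(M)\cong\mathbb{Z}_{\beta_1}$, whereas $l_1^{w_1}=l_1$ is a generator. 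The ``delicate point'' you defer --- that the conversion through $CF_j$ and $L_j$ ``abelianizes consistently'' --- is therefore not a routine verification to be postponed: it is the entire content of this part of the lemma, and the test case shows that the computation (and indeed the statement's normalization of $\eta_{l,j}$) must be handled much more carefully than your sketch allows.
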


Be careful that two different indexing sets may represent the same homology class, because of the torsion. The proof is an easy generalization of \cite[Lemma 4]{CMM}. 
The case of links is similar, because we assign a homology class to each component with the same formulas regarding only the component we are considering.


\paragraph{The first homology group of the complement of a link}
Now let's turn our attention to the first homology group of the complement, $H_{1}(M \smallsetminus L)$.

Through the Hurewicz theorem we can compute the first homology group starting from the fundamental group; the abelianization of its presentation produces the following presentation of abelian groups, where $\nu$ is the number of components of $L = L_{1} \sqcup \ldots \sqcup L_{\nu}$.
Moreover, we denote the coefficients of the homology class of the component $L_{i}$ by $\eta_{a,j,i}$, $\eta_{b,j,i}$, $\eta_{h,i}$ and $\eta_{l,j,i}$, where the third index specifies the component.

\begin{theorem}\label{homologyMoFo}
If both $M$ and $F$ are orientable ($\gamma_{i}, \delta_{i}=+1 \ \forall i=1, \ldots, g $), the first homology group of a link $L$ in the Seifert fibered space $M$ is:
\begin{align}\label{homMorFor}
 H_{1}&(M  \smallsetminus L) =\langle \ g_{1}, \ldots, g_{\nu}, 
a_{1}, b_{1}, \ldots, a_{g}, b_{g}, h, l_{1}, \ldots, l_{k} \ | \nonumber \\
 & | \
 \big(\prod_{i=1}^{k}l_{i}^{-\beta_{i}}\big)= \big(\prod_{i=1}^{\nu}g_{i}^{\eta_{h,i}}\big) ,  
 l_{j}^{\alpha_{j}}=\big(\prod_{i=1}^{\nu} g_{i}^{-\eta_{l,j,i}} \big)  h,  \forall j \in \{ 1, \ldots, k \},\nonumber  \\
&\big(\prod_{i=1}^{\nu} g_{i}^{\eta_{a,j,i}} \big)=1, \big(\prod_{i=1}^{\nu} g_{i}^{\eta_{b,j,i}} \big) =1, \ \forall j \in \{ 1, \ldots, g \} \ \rangle. 
\end{align}

If $M$ is orientable but $F$ is non-orientable ($\gamma_{i}=-1, \forall i=1, \ldots, g$):
\begin{align}\label{homMorFnor}
 H_{1}&(M  \smallsetminus L) = \langle \ g_{1}, \ldots, g_{\nu}, 
a_{1}, \ldots, a_{g}, h, l_{1}, \ldots, l_{k} \ |\nonumber  \\
 &| \
 \big( \prod_{i=1}^{g} a_{i}^{2} \big) \big(\prod_{i=1}^{k}l_{i}^{-\beta_{i}}\big) = \big(\prod_{i=1}^{\nu}g_{i}^{\eta_{h,i}}\big) ,  
 l_{j}^{\alpha_{j}}=\big(\prod_{i=1}^{\nu} g_{i}^{-\eta_{l,j,i}} \big)  h,  \forall j \in \{ 1, \ldots, k \},\nonumber  \\
&\big(\prod_{i=1}^{\nu} g_{i}^{\eta_{a,j,i}} \big) h^{1-\gamma_{j}}=1, \ \forall j \in \{ 1, \ldots, g \} \ \rangle. 
\end{align}
In the same case, the presentation of the homology group with $\mathbb{Z}_{2}$ coefficients simplifies and becomes the same of Equation \ref{homMorFor}.
\end{theorem}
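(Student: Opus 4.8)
The plan is to invoke the Hurewicz theorem, so that $H_{1}(M \smallsetminus L)$ is the abelianization of the presentation supplied by Theorem \ref{OrGroup} (for $F$ orientable) or Theorem \ref{NorGroup} (for $F$ non-orientable). The decisive feature of abelianization is that every inner automorphism collapses: for any word $w$ the conjugation $C(w)$ becomes the identity, so all conjugating prefixes in the relations may simply be deleted. I would begin by rewriting the generators additively and noting that the Wirtinger relations $W_{1}, \ldots, W_{s}$, which make the overpasses at each crossing conjugate, identify all overpasses lying on a single component; the $x_{i}$ therefore collapse to one generator $g_{c}$ per component $L_{c}$, producing $g_{1}, \ldots, g_{\nu}$.

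Next I would verify, family by family, that the conjugation relations impose nothing further on the $g_{c}$. The relation $CF_{j}$ becomes $l_{j}^{-\beta_{j}} = l_{j}^{-\beta_{j}}$; the relation $CV_{j}$ equates the two overpasses flanking an arrow, which lie on one component, and the identity $\epsilon_{2t+j} = -\epsilon_{2t+n+j}$ makes the exponents agree; each $CX_{j}$ equates the overpasses at the identified boundary points $+j$ and $-j$, again on one component, and $\epsilon_{t+j} = -\epsilon_{j}$ together with the displayed exponent renders it a tautology. Hence $CF_{j}$, $CV_{j}$ and $CX_{j}$ all disappear.

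The content then lies in three families, each handled via Lemma \ref{homologyclass}. The surgery relation $L_{j}: l_{j}^{\alpha_{j}} = y_{j}^{-1}h$ abelianizes by replacing $y_{j}$ with $\prod_{i} g_{i}^{\eta_{l,j,i}}$ (the abelianized reading of $y_{j}$ is exactly the winding number around the $j$-th exceptional point), giving $l_{j}^{\alpha_{j}} = \big(\prod_{i} g_{i}^{-\eta_{l,j,i}}\big)h$. In the surface relation $F$ the commutators $[a_{i},b_{i}]$ vanish (while $\prod a_{i}^{2}$ survives in the non-orientable case) and the factor $\prod_{i=2t+1}^{2t+n} x_{i}^{-\epsilon_{i}}$ becomes $\prod_{i} g_{i}^{-\eta_{h,i}}$ through $\eta_{h} = \sum_{k} \epsilon_{2t+k}$, yielding the first displayed relation of Equation \ref{homMorFor} (resp. \ref{homMorFnor}). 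The edge relations $A_{j}$ (and $B_{j}$ when $F$ is orientable) reduce, using $\eta_{a,j} = \sum_{+k \textrm{ on } a_{j}} \epsilon_{k}$ and $a_{j}ha_{j}^{-1} = h$, to $\big(\prod_{i} g_{i}^{\eta_{a,j,i}}\big) h^{1-\gamma_{j}} = 1$ (and its $b$-analogue). Choosing $\gamma_{i} = \delta_{i} = +1$ kills every $h^{1-\gamma_{j}}$ and gives Equation \ref{homMorFor}; choosing $\gamma_{i} = -1$ retains the factors $h^{2}$ and $\prod a_{i}^{2}$ and gives Equation \ref{homMorFnor}. For the $\Z_{2}$-coefficient statement I would pass to $H_{1}(M \smallsetminus L) \otimes \Z_{2}$, adjoining $g^{2} = 1$ for each generator and reading relations modulo $2$; the factors $h^{2}$ and $\prod a_{i}^{2}$ then become trivial, collapsing Equation \ref{homMorFnor} to the form of Equation \ref{homMorFor}.

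I expect the main obstacle to be bookkeeping rather than conceptual: confirming that the sign conventions, especially the placement of the extra $h$ in the $\gamma_{i} = -1$ and $\delta_{i} = -1$ variants of $CX_{j}$ together with the exponents $\pm\epsilon_{j}$, force each conjugation relation to collapse to a genuine tautology rather than to a spurious relation such as $g_{c}^{2} = 1$, and checking that the abelianized words $\prod x_{i}^{\epsilon_{i}}$ and $y_{j}$ reproduce exactly the homology coefficients of Lemma \ref{homologyclass}.
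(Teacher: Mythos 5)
Your proposal is correct and follows essentially the same route as the paper: the authors likewise prove this by abelianizing the presentations of Theorems \ref{OrGroup} and \ref{NorGroup}, observing that the conjugation relations reduce to $x_i = x_j$ for overpasses on the same component (which holds in both cases here since $M$ orientable forces $\gamma_i=\delta_i=+1$ for $F$ orientable and $\gamma_i=-1$ for $F$ non-orientable). Your write-up simply supplies the sign bookkeeping and the identification of the abelianized words with the coefficients of Lemma \ref{homologyclass}, which the paper leaves implicit.
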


\begin{proof}
If both $F$  and $M$ are orientable, it is enough to abelianize the group presentation of Theorem \ref{OrGroup}, considering that the conjugation relations reduce to $x_{i}=x_{j}$ for every overpass corresponding to the same component. If $M$ is orientable but $F$ is not, then we abelianize the group presentation of Theorem \ref{NorGroup} and reduce all the $x_{i}$ to a generator for each component. 
\end{proof}

For the case where $M$ is non-orientable we cannot use the coefficients $\eta_{a,j,i}$, $\eta_{b,j,i}$, $\eta_{h,i}$ and $\eta_{l,j,i}$, instead we will denote by $\zeta_{a,j,i}$, $\zeta_{b,j,i}$, $\zeta_{h,i}$ and $\zeta_{l,j,i}$ the coefficients arising from the group abelianization.

\begin{theorem}\label{homologyMnFo}
If $M$ is non-orientable and $F$ is orientable:
\begin{align}\label{homMnorFor}
 H_{1}&(M  \smallsetminus L) = \langle \ g_{1}, \ldots, g_{\nu}, 
a_{1}, b_{1}, \ldots, a_{g}, b_{g}, h, l_{1}, \ldots, l_{k} \ | \nonumber\\
 &| \ \big(\prod_{i=1}^{k}l_{i}^{-\beta_{i}}\big) = \big(\prod_{i=1}^{\nu}g_{i}^{\zeta_{h,i}}\big) ,  
 l_{j}^{\alpha_{j}}=\big(\prod_{i=1}^{\nu} g_{i}^{-\zeta_{l,j,i}} \big)  h,  \forall j \in \{ 1, \ldots, k \}, \nonumber\\
&\big(\prod_{i=1}^{\nu} g_{i}^{\zeta_{a,j,i}} \big) h^{1-\gamma_{j}}=1, \big(\prod_{i=1}^{\nu} g_{i}^{\zeta_{b,j,i}} \big)  h^{1-\delta_{j}}=1, \ \forall j \in \{ 1, \ldots, g \} \ \rangle. 
\end{align}

If both $M$ and $F$ are non-orientable:
\begin{align}\label{homMnorFnor}
 H_{1}&(M  \smallsetminus L) = \langle \ g_{1}, \ldots, g_{\nu}, 
a_{1}, \ldots, a_{g}, h, l_{1}, \ldots, l_{k} \ | \nonumber\\
 &| \
 \big( \prod_{i=1}^{g} a_{i}^{2} \big) \big(\prod_{i=1}^{k}l_{i}^{-\beta_{i}}\big) = \big(\prod_{i=1}^{\nu}g_{i}^{\zeta_{h,i}}\big) ,  
 l_{j}^{\alpha_{j}}=\big(\prod_{i=1}^{\nu} g_{i}^{-\zeta_{l,j,i}} \big)  h,  \forall j \in \{ 1, \ldots, k \}, \nonumber\\
&\big(\prod_{i=1}^{\nu} g_{i}^{\zeta_{a,j,i}} \big) h^{1-\gamma_{j}}=1, \ \forall j \in \{ 1, \ldots, g \} \ \rangle. 
\end{align}

\end{theorem}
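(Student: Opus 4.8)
The plan is to mirror the proof of Theorem \ref{homologyMoFo} and obtain both presentations by abelianizing the group presentations already established, invoking the Hurewicz theorem. For the first case, in which $M$ is non-orientable and $F$ is orientable, I would abelianize the presentation of Theorem \ref{OrGroup}; for the second case, in which both are non-orientable, I would abelianize that of Theorem \ref{NorGroup}. The single observation driving everything is that in an abelian group every conjugation $C(w)(x)=wxw^{-1}$ equals $x$, so all the conjugating prefactors appearing in the relations $F$, $A_j$, $B_j$, $CF_j$, $CV_j$ and $CX_j$ simply drop out.

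First I would dispose of the relations that become vacuous. After the conjugations are removed, $CF_j$ reads $l_j^{-\beta_j}=l_j^{-\beta_j}$ and is trivial, while $CV_j$ identifies the overpass before an arrow with the overpass after it, consistently with collapsing both to a single generator. The Wirtinger relations $W_i$ force overpasses meeting at a crossing to agree, and the relations $CX_j$ carry this identification across the edges of $G$, so that all overpasses belonging to the $m$-th component of $L$ can be replaced by one meridian generator $g_m$. What then survives is the surface relation $F$, the edge relations $A_j$ (together with $B_j$ when the base is orientable), and the surgery relations $L_j$, which after the substitution yield exactly the displayed presentations \eqref{homMnorFor} and \eqref{homMnorFnor}. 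In contrast with Theorem \ref{homologyMoFo}, the factors $h^{1-\gamma_j}$ and $h^{1-\delta_j}$ now persist, because $\gamma_j$ and $\delta_j$ are no longer forced to equal $+1$.

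The step I expect to be the crux, and the reason the coefficients must be renamed $\zeta$ rather than imported from Lemma \ref{homologyclass}, is the sign bookkeeping in the overpass reduction. When an overpass crosses an edge carrying sign $-1$ the fiber orientation is reflected, so the left-hand-rule meridian is sent to the inverse of the component generator rather than to the generator itself. This is visible directly in the abelianized forms of $CX_j$: the $\gamma_i=+1$ branch reduces to $x_{t+j}=x_j$, whereas the $\gamma_i=-1$ branch, using $\epsilon_{t+j}=-\epsilon_j$, reduces to $x_{t+j}=x_j^{-1}$. Hence the exponent sums gathered into $\zeta_{a,j,m}$, $\zeta_{b,j,m}$, $\zeta_{h,m}$ and $\zeta_{l,j,m}$ differ from the naive winding numbers $\eta$ by the accumulated signs of the orientation-reversing crossings. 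Once this sign tracking is carried out consistently, the remaining bookkeeping is routine: the commutators $[a_i,b_i]$ in the surface relation become trivial, whereas the squares $a_i^2$ persist in the non-orientable-base case, and one reads off the two stated presentations.
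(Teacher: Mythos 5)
Your proposal is correct and follows essentially the same route as the paper: abelianize the presentations of Theorems \ref{OrGroup} and \ref{NorGroup}, observe that the conjugations vanish, and note that the $CX_j$ relations give $x_{t+j}=x_j^{-1}$ rather than $x_{t+j}=x_j$ precisely when the edge sign is reversed, which is why the $\eta$ coefficients must be replaced by the $\zeta$ coefficients. Your sign analysis of the two branches of $CX_j$ via $\epsilon_{t+j}=-\epsilon_j$ is exactly the point the paper's (much terser) proof makes.
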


\begin{proof}
When $F$ is orientable, starting from the group presentations of Theorem \ref{OrGroup}, when we abelianize the conjugation relations, they reduce to $x_{i}=x_{j}$ for every overpass corresponding to the same component, except when $\gamma_{i}$ or $\delta_{i}$ are equal to $-1$; in this case the conjugation relations become $x_{i}=x_{j}^{-1}$ hence the substitution is $x_{i}=g_{k}^{-1}$ instead of $x_{i}=g_{k}$ as usual; the consequence is that the coefficients $\eta_{a,j,i}$, $\eta_{b,j,i}$, $\eta_{h,i}$ and $\eta_{l,j,i}$ coming from the homology class of $L_{k}$ are no more useful and we have to modify them into $\zeta_{a,j,i}$, $\zeta_{b,j,i}$, $\zeta_{h,i}$ and $\zeta_{l,j,i}$. If $F$ is non-orientable, when we abelianize the group of Theorem \ref{NorGroup}, the case $x_{i}=x_{j}^{-1}$ is produced when $\gamma_{i}$ or $\delta_{i}$ are equal to $1$.
\end{proof}


\paragraph{The rank of the first homology group}
If we consider the homology presentation of the complement of a link $L$ in the Seifert fibered space $M$, then (when $F$ is orientable) the generators $\alpha_{1}, \beta_{1}, \ldots, \alpha_{g}, \beta_{g}$ are always abelian free, while sometimes the generators $g_{1}, \ldots, g_{\nu}$ are eliminated or produce torsion. It is not possible to predict when this happens in an easy way.

A nice way to investigate this problem is the following long exact sequence, corresponding to the pair $(M, M\smallsetminus L)$: 
$$\cdots \rightarrow  H_{2}(M) \rightarrow H_{2}(M, M \smallsetminus L) \rightarrow H_{1}(M\smallsetminus L)  \rightarrow H_{1}(M) \rightarrow H_{1}(M, M \smallsetminus L) \rightarrow \cdots  $$

For all $i=0,\ldots, 3$, using excision and the fact that the homology of disjoint spaces is the direct sum of the respective homologies, it holds:
$$H_{i}( M, M \smallsetminus L) \cong H_{i}( \bigsqcup_{j=1}^{ \nu} T_{j} , \bigsqcup _{j=1}^{\nu} \partial T_{j} ) \cong \bigoplus_{j=1}^{\nu} H_{i}(T_{j}, \partial T_{j}). $$

From the Lefschetz duality we have $H_{i}(T, \partial T) \cong H^{3-i}(T)$, as a consequence $H_{2}( M, M \smallsetminus L) \cong \Z^{\nu}$ and $H_{1}( M, M \smallsetminus L) \cong 0$.

The long exact sequence becomes:
$$\cdots \rightarrow  H_{2}(M) \rightarrow \Z^{\nu} \rightarrow H_{1}(M\smallsetminus L)  \rightarrow H_{1}(M) \rightarrow 0  $$

Under the assumption that $M$ is a rational homology sphere ($\Q HS$), we have $H_{2}(M) \cong H^{1}(M) \cong \textrm{Hom}(H_{1}(M), \Z) \cong 0$ and the exact sequence simplifies to:
$$0 \rightarrow \Z^{\nu} \rightarrow H_{1}(M\smallsetminus L)  \rightarrow H_{1}(M) \rightarrow 0.$$

This sequence in general is not split, but some split cases may be identified directly from the homology presentation.

\begin{corollary}
If $M$ is a Seifert fibered space and $L$ is a local link (that is, contained inside a $3$-ball) then $H_1(M\smallsetminus K)\cong H_1(M) \oplus \mathbb{Z}^{\nu}$.
\end{corollary}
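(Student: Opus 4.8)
The plan is to exploit the long exact sequence of the pair $(M, M \smallsetminus L)$ derived in the paragraph above, specialized to the case where $L$ is a local link. Under the $\Q HS$ assumption the relevant portion reads
$$0 \rightarrow \Z^{\nu} \xrightarrow{\ \phi\ } H_{1}(M\smallsetminus L) \rightarrow H_{1}(M) \rightarrow 0,$$
so it suffices to produce a splitting, i.e.\ a homomorphism $H_1(M\smallsetminus L) \to \Z^{\nu}$ that is a left inverse to $\phi$, or equivalently a retraction realizing $H_1(M)$ as a direct summand. The key geometric observation is that since $L$ is contained in a $3$-ball $B \subset M$, the meridians of the components of $L$ bound disks inside $B$ that are disjoint from the rest of $M$, and the generators $g_1, \ldots, g_\nu$ coming from the link components can be separated from the ambient generators $a_i, b_i, h, l_i$ of $M$.

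The cleanest route I would take is to construct the splitting map directly using linking numbers inside the ball. First I would observe that because $L$ lies in a ball $B$, there is a well-defined ``total linking number'' homomorphism: for each component $L_j$, choose a small $2$-sphere $S_j \subset B$ separating $L_j$ from everything else, and send a $1$-cycle in $M \smallsetminus L$ to its intersection number with the Seifert surface / its linking with $L_j$ measured inside $B$. Because $B$ is simply connected, any loop in $M \smallsetminus L$ that represents an element pulled back from $H_1(M)$ (i.e.\ one that can be pushed off $B$ entirely) has zero linking with each $L_j$, while the meridian of $L_j$ maps to the $j$-th basis vector. This exhibits the desired left inverse $H_1(M \smallsetminus L) \to \Z^\nu$ to $\phi$, and the sequence splits.

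An alternative, more algebraic route is to read the splitting directly off the homology presentation in Theorem~\ref{homologyMoFo} (and its analogues): since $L$ is local, its components are null-homotopic in $M$, so every homology coefficient $\eta_{a,j,i}, \eta_{b,j,i}, \eta_{h,i}, \eta_{l,j,i}$ (respectively the $\zeta$-coefficients) vanishes. Setting all these to zero, the presentation decouples: the generators $g_1, \ldots, g_\nu$ no longer appear in any relation and generate a free $\Z^\nu$ summand, while the remaining generators and relations are exactly those of the presentation of $H_1(M)$ obtained by taking $L = \emptyset$. This makes the direct sum decomposition manifest and simultaneously identifies the quotient with $H_1(M)$. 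I would present this computation as the main body of the proof and use the long exact sequence as the conceptual backbone.

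The step I expect to require the most care is justifying that locality forces all the homology coefficients to vanish, i.e.\ that a link sitting in a $3$-ball is null-homologous componentwise in $M$. This is intuitively clear, but it must be tied precisely to the formulas of Lemma~\ref{homologyclass}: one must argue that the diagram of a local link can be isotoped so that no component crosses any edge of $G$, passes any arrow, or winds around an exceptional point, forcing each $\epsilon$-sum and each winding number $\eta_{l,j,i}$ to be zero. The only subtlety is confirming that this holds uniformly across all four orientability cases (so that the $\zeta$-coefficients in Theorem~\ref{homologyMnFo} also vanish), after which the algebraic decoupling and the splitting are routine.
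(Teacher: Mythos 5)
Your main (algebraic) argument is essentially the paper's own proof: one isotopes the local link into a diagram with no boundary points, no arrows, and no windings around exceptional points, so all homology coefficients vanish, the presentation decouples into the relations of $H_1(M)$ plus free generators $g_1,\ldots,g_\nu$, and $H_1(M\smallsetminus L)\cong H_1(M)\oplus\Z^{\nu}$ is read off directly. The auxiliary linking-number splitting and the $\Q HS$ exact-sequence framing are not needed (the corollary does not assume $M$ is a $\Q HS$, and the paper's proof bypasses the exact sequence entirely), but they do not affect the correctness of the core computation.
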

\begin{proof}
If the link is local, then looking to the diagram of the link we may assume that it has not got any boundary points, nor arrows, nor windings around the surgeries. As a consequence, the fundamental group presentation has only the relations of the Seifert fibered spaces and we see directly \mbox{$H_1(M\smallsetminus K)\cong H_1(M) \oplus \mathbb{Z}^{\nu}$.}
\end{proof}

It is not possible to assume just $L$ to be a null-homologous knot, as Example \ref{exK} shows.

\begin{example}\label{exK}
Let $M$ be a non-orientable Seifert fibered space with an orientable base surface and without surgeries. Assume $g(F)=1$ and $\gamma_{1}=1$, $\delta_{1}=-1$. Let $K$ be the null-homologous knot in $M$ depicted in Figure~\ref{fig:ex41}. Since $H_{1}(M)= \langle \ a,b,h \ | \ h^{2}=1 \ \rangle$, then $K$ is null-homologous even if it has two arrows. Equation \ref{homMnorFor} gives us the result $H_{1}(M \smallsetminus K)= \langle \ a,b,h,g \ | \ h^{2}=1, g^{2}=1 \ \rangle \cong \Z^{2} \oplus \Z_{2}^{2}$, that is to say, the sequence in this case is not split.
\begin{figure}[htb]
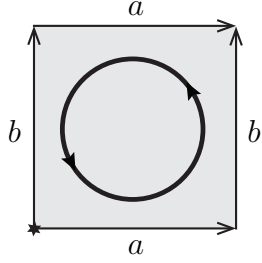

\centering
\begin{overpic}[page=44]{seif}
\put(48,0){$a$}\put(96,45){$b$}\put(48,96){$a$}\put(0,45){$b$}   
\end{overpic}
\caption{A knot diagram.}
\label{fig:ex41}
\end{figure}
\end{example}

\begin{corollary}
If $M$ is a circle bundle (that is, $F$ is orientable, $M$ is orientable and without surgeries) and $L$ is a null-homologous link (that is, all of its components are null-homologous) then $H_1(M\smallsetminus L)\cong H_1(M) \oplus \mathbb{Z}^{\nu}$.
\end{corollary}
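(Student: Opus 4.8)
The plan is to apply Theorem~\ref{homologyMoFo} directly, specializing Equation~\ref{homMorFor} to the circle bundle case, and to exploit the fact that a circle bundle over an orientable surface has torsion-free first homology. First I would observe that the hypotheses ($F$ orientable, $M$ orientable, no surgeries) place us exactly in the setting of Equation~\ref{homMorFor} with $k=0$ and $\gamma_j=\delta_j=+1$ for all $j$. Setting $L=\emptyset$ (equivalently, reading off the $H_1(M)$ presentation recalled before Lemma~\ref{homologyclass} with $k=0$) gives
$$H_1(M)=\langle a_1,b_1,\ldots,a_g,b_g,h\mid\ \rangle\cong\Z^{2g+1},$$
a free abelian group.

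The decisive step is to translate the null-homologous hypothesis into the vanishing of the integer coefficients produced by Lemma~\ref{homologyclass}. Since there are no exceptional fibers, the class of the component $L_i$ is $a_1^{\eta_{a,1,i}}\cdots b_g^{\eta_{b,g,i}}h^{\eta_{h,i}}\in H_1(M)$, and because $H_1(M)$ is freely generated by $a_1,\ldots,b_g,h$, the equation $[L_i]=0$ forces every exponent to vanish: $\eta_{a,j,i}=\eta_{b,j,i}=\eta_{h,i}=0$ for all $i$ and all $j$. This is precisely where torsion-freeness is essential, and it is the feature that fails in the torsion setting of Example~\ref{exK}, where a null-homologous knot may still contribute a nonzero integer exponent (there $\eta_h=2$) that survives as a relation in the complement. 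I expect this reduction to be the only real content of the argument.

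Finally I would substitute these vanishing coefficients into Equation~\ref{homMorFor} with $k=0$. The relation $\prod_{i=1}^{k}l_i^{-\beta_i}=\prod_{i=1}^{\nu}g_i^{\eta_{h,i}}$ collapses to the empty relation, and each edge relation $\prod_{i=1}^{\nu}g_i^{\eta_{a,j,i}}=1$ and $\prod_{i=1}^{\nu}g_i^{\eta_{b,j,i}}=1$ becomes trivial; no $l_j$ generators or surgery relations are present. Hence
$$H_1(M\smallsetminus L)=\langle g_1,\ldots,g_\nu,a_1,b_1,\ldots,a_g,b_g,h\mid\ \rangle\cong\Z^{\nu}\oplus\Z^{2g+1}\cong\Z^{\nu}\oplus H_1(M),$$
which is the desired splitting. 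Note that the long exact sequence of the pair $(M,M\smallsetminus L)$ used earlier does not apply here, since a nontrivial circle bundle over a positive-genus surface is not a $\Q HS$; working directly from the presentation of Theorem~\ref{homologyMoFo} is what makes the splitting transparent, so I do not anticipate a genuine obstacle beyond making the torsion-free reduction explicit.
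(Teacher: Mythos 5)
Your proposal is correct and follows essentially the same route as the paper: specialize Equation~\ref{homMorFor} to $k=0$, use the freeness of $H_1(M)$ to conclude that all the coefficients $\eta_{a,j,i}$, $\eta_{b,j,i}$, $\eta_{h,i}$ vanish for a null-homologous link, and observe that the resulting presentation has no nontrivial relations, giving $\Z^{\nu}\oplus H_1(M)$. Your write-up is merely more explicit than the paper's (which states the same argument in one sentence), including the useful remark contrasting this with the torsion phenomenon of Example~\ref{exK}.
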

\begin{proof}
Since the components of $L$ are null-homologous and all the generators of $H_{1}(M)$ are free, then the indexes $\mu_{a,i,j}$, $\mu_{b,i,j}$ and $\mu_{h,j}$ are all equal to zero: the presentations of Equation \ref{homMorFor} eliminates from the relations all the generators of the link, that is, the relations are exactly the ones of $H_{1}(M)$ and the exact sequence splits.
\end{proof}

A weaker problem asks if $rank(H_{1}(M \smallsetminus L))=rank(H_{1}(M))+\nu$. A lower bound for $rank(H_{1}(M \smallsetminus L))$ is guaranteed by the following condition.

\begin{corollary}\label{homsplit}
Let $M$ be an orientable Seifert fibered space with at least one surgery, with orientable base surface $F$ of genus $g$ and $L$ a $\nu$-components (all of them null-homologous) link in $M$ whose diagram has no boundary points (arrows are allowed). Then $rank(H_{1}(M \smallsetminus L)) \ge 2g +\nu$. 
\end{corollary}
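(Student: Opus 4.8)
The plan is to read the rank directly off the homology presentation of Theorem \ref{homologyMoFo}. Since both $M$ and $F$ are orientable, $H_{1}(M\smallsetminus L)$ is given by Equation \ref{homMorFor} on the generators $g_{1},\ldots,g_{\nu},a_{1},b_{1},\ldots,a_{g},b_{g},h,l_{1},\ldots,l_{k}$. The first step is to exploit the hypothesis that the diagram has no boundary points, i.e. $t=0$. By the formulas of Lemma \ref{homologyclass}, each coefficient $\eta_{a,j,i}$ and $\eta_{b,j,i}$ is a sum of signs $\epsilon_{\kappa}$ ranging over the boundary points lying on the edge $a_{j}$ (respectively $b_{j}$); with $t=0$ these index sets are empty, so $\eta_{a,j,i}=\eta_{b,j,i}=0$ for all $i,j$. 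Consequently the $2g$ edge relations $\prod_{i}g_{i}^{\eta_{a,j,i}}=1$ and $\prod_{i}g_{i}^{\eta_{b,j,i}}=1$ in Equation \ref{homMorFor} become trivial.

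Next I would observe that, once these relations are discarded, the generators $a_{1},b_{1},\ldots,a_{g},b_{g}$ do not occur in \emph{any} surviving relation of the abelian presentation. Hence the presentation splits as $H_{1}(M\smallsetminus L)\cong\Z^{2g}\oplus A$, where $A$ is generated by $g_{1},\ldots,g_{\nu},h,l_{1},\ldots,l_{k}$ subject only to the single surface relation $\prod_{i}l_{i}^{-\beta_{i}}=\prod_{i}g_{i}^{\eta_{h,i}}$ together with the $k$ surgery relations $l_{j}^{\alpha_{j}}=\big(\prod_{i}g_{i}^{-\eta_{l,j,i}}\big)h$. The free summand $\Z^{2g}$ already accounts for $2g$ of the asserted rank.

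It then remains to bound $\operatorname{rank}(A)$ from below, and here I would invoke the elementary fact that a finitely presented abelian group on $m$ generators with $r$ defining relations has rank at least $m-r$ (tensoring with $\Q$, the cokernel of an $r\times m$ integer matrix has $\Q$-dimension at least $m-r$). The group $A$ has $\nu+1+k$ generators and exactly $1+k$ relations, so $\operatorname{rank}(A)\ge(\nu+1+k)-(1+k)=\nu$. Combining the two pieces yields $\operatorname{rank}(H_{1}(M\smallsetminus L))=2g+\operatorname{rank}(A)\ge 2g+\nu$, as claimed.

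I do not expect a genuine obstacle: the argument is a generator–relation count. The only point deserving care is that the no-boundary-points hypothesis is \emph{exactly} what makes the edge relations vanish and thereby liberates the $2g$ surface generators; without it the same tally would give only the weaker bound $\nu$, since the $a_{j},b_{j}$ could be tied into the edge relations. I would also remark that the null-homologous assumption and the presence of at least one surgery are not actually consumed by this lower-bound computation — the coefficients $\eta_{h,i},\eta_{l,j,i}$ affect only the entries of the relation matrix, not the counts $m$ and $r$ — so these hypotheses chiefly serve to position the statement alongside the neighbouring splitting corollaries, and I would retain them in the statement for consistency.
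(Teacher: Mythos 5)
Your argument is correct and is essentially the paper's own proof: the authors likewise observe that the absence of boundary points trivializes the $2g$ edge relations in Equation \ref{homMorFor}, leaving $\nu+2g+1+k$ generators subject to $1+k$ relations, and conclude $rank(H_{1}(M\smallsetminus L))\ge 2g+\nu$ by the same generator-minus-relation count. Your explicit splitting off of the free summand $\Z^{2g}$ and your remark that the null-homology and surgery hypotheses are not actually consumed are accurate refinements of presentation, not a different method.
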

\begin{proof}
The proof is done by simplifying the presentation of the first homology group. 
If the diagram has no boundary points, the homology presentation of Equation \ref{homMorFor} becomes:
\begin{align*}
 H_{1}(M  \smallsetminus L) =&\langle \ g_{1}, \ldots, g_{\nu}, 
a_{1}, b_{1}, \ldots, a_{g}, b_{g}, h, l_{1}, \ldots, l_{k} \ | \\
 &| \
 \big(\prod_{i=1}^{k}l_{i}^{-\beta_{i}}\big)= \big(\prod_{i=1}^{\nu}g_{i}^{\eta_{h,i}}\big) ,  
 l_{j}^{\alpha_{j}}=\big(\prod_{i=1}^{\nu} g_{i}^{-\eta_{l,j,i}} \big)  h,  \forall j \in \{ 1, \ldots, k \} \ \rangle. 
\end{align*}
 That is, we have $\nu + 2g+1+k$ generators and $1+k$ relations. As a consequence $rank(H_{1}(M \smallsetminus L))\ge 2g+\nu$.
 \end{proof}

Two easy conditions on $M$ so that the conditions of Corollary \ref{homsplit} are satisfied, are that $M$ has genus $0$ or that $M$ is a $\Q HS$.

Nevertheless, not only the exact sequence does not generally splits, but also the rank condition $rank(H_{1}(M \smallsetminus L))=rank(H_{1}(M))+\nu$ fails. Besides Example~\ref{exK}, another class of counterexamples is the following one.

\begin{example}
In case where $M$ is orientable and without surgeries, if the base surface is orientable, then $H_{1}(M)=\langle \ a_{1}, b_{1}, \ldots, a_{g}, b_{g}, h \ \rangle$. If the diagram of a link $L\subset M$ has no boundary points this means that the homology presentation of Equation \ref{homMorFor} becomes:
$$
 H_{1}(M  \smallsetminus L) =\langle \ g_{1}, \ldots, g_{\nu}, 
a_{1}, b_{1}, \ldots, a_{g}, b_{g}, h \ |  \big(\prod_{i=1}^{\nu}g_{i}^{\eta_{h,i}}\big) =1  \ \rangle. 
$$
 That is, we have $\nu + 2g+1$ generators, the same of $H_{1}(M)\oplus \Z^{\nu} $, but torsion may arise from the link generators. 
\end{example}

In conclusion, the torsion depends on too many factors in order to give an explicit formula for it. Once having the presentation, the torsion can be easily computed by the Euclidean algorithm (also through software like GAP or SAGE). This is very useful if we want to compute the twisted Alexander polynomial associated to the group presentation. In this context, the most important condition on $H_{1}(M \smallsetminus L)$ is that its rank is at least greater or equal to the number of components.

\begin{lemma}\label{rankone}
Given a link $L$ with $\nu$ components in a Seifert fibered space $M$, it holds that $rank(H_{1}(M \smallsetminus L)) \ge \nu$. 
\end{lemma}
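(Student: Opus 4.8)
The plan is to pass to rational coefficients, where $\mathrm{rank}\,H_{1}(M\smallsetminus L)=b_{1}(M\smallsetminus L)=\dim_{\Q}H_{1}(M\smallsetminus L;\Q)$, and to read off the estimate from the long exact sequence of the pair $(M,M\smallsetminus L)$ already assembled above. Tensoring that sequence with $\Q$ and using the excision computation $H_{2}(M,M\smallsetminus L;\Q)\cong\Q^{\nu}$ together with $H_{1}(M,M\smallsetminus L;\Q)=0$, I would isolate the tail
$$H_{2}(M;\Q)\xrightarrow{\ \phi\ }\Q^{\nu}\xrightarrow{\ \partial\ }H_{1}(M\smallsetminus L;\Q)\longrightarrow H_{1}(M;\Q)\longrightarrow 0,$$
in which the connecting map $\partial$ sends the $j$-th generator (the meridional disk of $L_{j}$) to the class of the meridian $\mu_{j}$. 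By exactness the image of the left map is $\ker\partial$, so $\dim_{\Q}\mathrm{im}\,\partial=\nu-\dim_{\Q}\mathrm{im}\,\phi$, and since $H_{1}(M\smallsetminus L;\Q)$ is an extension of $H_{1}(M;\Q)$ by $\mathrm{im}\,\partial$ we obtain
$$b_{1}(M\smallsetminus L)=\bigl(\nu-\dim_{\Q}\mathrm{im}\,\phi\bigr)+b_{1}(M).$$

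Thus the statement reduces to the single inequality $\dim_{\Q}\mathrm{im}\,\phi\le b_{1}(M)$. As $\mathrm{im}\,\phi$ is a subspace of $H_{2}(M;\Q)$, it suffices to show $b_{2}(M)\le b_{1}(M)$, and for a closed connected orientable $3$-manifold this is in fact an equality: from $\chi(M)=0$ together with $b_{0}(M)=b_{3}(M)=1$ one reads $b_{2}(M)=b_{1}(M)$, equivalently Poincaré duality gives $H_{2}(M;\Q)\cong H^{1}(M;\Q)\cong H_{1}(M;\Q)$. Substituting, $b_{1}(M\smallsetminus L)\ge\nu-b_{2}(M)+b_{1}(M)=\nu$, which is the claim. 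Conceptually this is nothing but ``half lives, half dies'': the exterior $W=M\smallsetminus N(L)$ is a compact orientable $3$-manifold whose boundary is a disjoint union of $\nu$ tori, and Poincaré--Lefschetz duality forces the image of $H_{1}(\partial W;\Q)\to H_{1}(W;\Q)$ to have dimension $\tfrac12 b_{1}(\partial W)=\nu$, so that $b_{1}(W)=b_{1}(M\smallsetminus L)\ge\nu$.

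The one point demanding care -- and where I expect the real difficulty to sit -- is orientability, which enters twice: it makes $N(L)$ a union of solid tori, so that $H_{2}(M,M\smallsetminus L;\Q)\cong\Q^{\nu}$ with meridional generators, and it yields $b_{2}(M)=b_{1}(M)$. Both features can fail when $M$ is non-orientable: a component $L_{j}$ bounding a Klein-bottle neighbourhood has $H_{2}(T_{j},\partial T_{j};\Q)=0$ and a $2$-torsion meridian, so it contributes nothing rationally and drops out of the count, while $b_{2}(M)=b_{1}(M)-1$ gives no compensation. I would therefore prove the lemma for $M$ orientable, which is the case relevant to the twisted Alexander polynomials of Section~\ref{linkTAP}; to recover a bound in the non-orientable case one must replace $\Q$ by the orientation local system, or pass to the orientation double cover and verify that the orientation-reversing components still force independent invariant classes, and this equivariant bookkeeping is the genuinely delicate step.
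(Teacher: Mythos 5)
Your argument is correct for orientable $M$, but it takes a genuinely different route from the paper's. The paper works entirely with the explicit abelianized presentations: it counts $\nu+2g+k+1$ generators against $2g+k+1$ relations in Equations \ref{homMorFor} and \ref{homMnorFor} (and the analogous count for non-orientable $F$), observing that the $2g$ boundary relations involve only the link generators $g_1,\dots,g_\nu$, so that at least $2g+\nu-\min(\nu,2g)\ge\nu$ free generators survive. That argument is elementary, purely combinatorial, and --- crucially --- indifferent to the orientability of $M$, since it never invokes duality. Your route through the long exact sequence of $(M,M\smallsetminus L)$ with rational coefficients, excision, and $b_2(M)=b_1(M)$ is cleaner where it applies and is independent of the diagrammatic presentation; it also explains conceptually (``half lives, half dies'') why the bound is exactly $\nu$ plus a correction, rather than emerging from bookkeeping.

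The genuine gap is the one you flag yourself: the lemma is stated for an arbitrary Seifert fibered space, and the paper both proves and uses it in the non-orientable case (Theorem \ref{homologyMnFo} and Example \ref{exK} concern non-orientable $M$, and Lemma \ref{rankone} is invoked in Section \ref{linkTAP} with no orientability hypothesis). Your diagnosis of why the duality argument breaks there is accurate --- an orientation-reversing component has a solid Klein bottle neighbourhood, contributes nothing to $H_2(M,M\smallsetminus L;\Q)$, and $b_2(M)$ drops by one --- but deferring the repair to unspecified ``equivariant bookkeeping'' on the orientation double cover leaves half the statement unproven. To close it, either carry out that covering-space argument in detail, or simply abelianize the presentations of Theorems \ref{OrGroup} and \ref{NorGroup} and count generators against relations as the paper does: with $\nu+2g+k+1$ generators and $2g+k+1$ relations the crude bound $\mathrm{rank}\ge\nu$ already follows, in every orientability case at once.
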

\begin{proof}
If the base surface $F$ is orientable, by analyzing the presentations of the homology group of the link given in Equations \ref{homMorFor} and \ref{homMnorFor}, we have $\nu + 2g + k +1$ generators and $2g + k +1$ relations. Be careful that the $2g$  boundary relations involve only the link generators $g_{1}, \ldots, g_{\nu}$, so they eventually erase these  generators instead of the surface ones; moreover, each relation may erase at most one generator, so the free generators between $g_{1}, \ldots, g_{\nu}, a_{1}, b_{1}, \ldots, a_{g}, b_{g}$ are at least $2g+\nu-\textrm{min}(\nu, 2g) \ge \nu$. The other generators may cancel out with the other relations, so $rank(H_{1}(M \smallsetminus L)) =2g+\nu-\textrm{min}(\nu, 2g) \ge \nu$.

When $F$ is not orientable, a similar reasoning on Equations
\ref{homMorFnor} and \ref{homMnorFnor} brings to $rank(H_{1}(M \smallsetminus L))= g+\nu-\textrm{min}(\nu, g) \ge \nu$.
\end{proof}


\section{Twisted Alexander polynomial of links in Seifert Manifolds}\label{linkTAP}

The group of the link is a powerful invariant, but due to the word problem we cannot in general tell if two group presentations present different groups.
The Alexander polynomial associated to a group presentation often enables us to distinguish the groups. Twisted Alexander polynomials were introduced by Wada in \cite{Wa}. In this section we recall the description of the multi-variable invariants, the focus will be on a particular class of twisted polynomials that considers a $1$-dimensional representation of the group, in order to keep track of the torsion part of the link group.
The behaviour of the twisted polynomials on local links and under the connected sum is shown. 

\paragraph{Twisted Alexander polynomials for finitely presented groups}

We follow Turaev's construction of twisted Alexander polynomials \cite{T}. 

Let $\langle \ x_{1}, \ldots, x_{m} \ | \ r_{1}, \ldots, r_{n} \ \rangle$ be a finite presentation of a group $\pi$, and let $H$ be the abelianization of $\pi$. A relation $r_{i}$ is an element of the free group $F=F(x_{1}, \ldots, x_{m})$. 

The Fox derivative of $r_{i}$ with respect to $x_{j}$, denoted by $\partial r_{i} / \partial x_{j}$, is defined recursively by the following rules:
$$ \frac{\partial 1}{ \partial x_{j}} =0, \ \frac{\partial x_{i}}{ \partial x_{j}}=\delta_{i,j},  \ \frac{\partial x_{i}^{-1}}{ \partial x_{j}}=\delta_{i,j} x_{i}^{-1}  ,  \ \frac{\partial (ux_{i})}{ \partial x_{j}}= \frac{\partial u }{ \partial x_{j}} + u \frac{\partial x_{i} }{ \partial x_{j}},$$  
where $u$ is a word of $F(x_{1}, \ldots, x_{m})$. The result is an element of $\Z[F]$. 

Now consider the matrix $[\partial r_{i} / \partial x_{j}]_{i,j}$ and apply the projection $\Z[F] \to \Z[\pi] \to \Z[H]$ to its entries: the result is the Alexander-Fox matrix $A$. We may assume $n \ge m$, by adding trivial relations if necessary. For each integer $d$ such that $0 \le d < m$, the ideal $E_{d}(\pi) \subset \Z[H]$ is generated by the minors of $A$ of order $m-d$. Let $E_{d}(\pi)=\Z[H]$ if $d \ge m$. The ideals do not depend on the presentation of $\pi$. The ideal $E_{0}(\pi)$ is completely determined by $H$: it is $0$ if $H$ is infinite and it is generated by $\sum_{h \in H} h $ otherwise. The focus will be on $E_{1}(\pi)=E(\pi)$.

The twisted Alexander polynomials we consider will be numerated by $\sigma \in \textrm{Hom}(\textrm{Tors} H, \C^{\ast})$. Let $G=H/\textrm{Tors}H$ and fix a splitting $H=  \textrm{Tors}H  \times G$. Let $\tilde{\sigma} : \Z[H] \to \C[G]$ be the homomorphism that sends $fg$ with $f \in \textrm{Tors}H$ and $g \in G$ to $\sigma(f)g$. Since $\C[G]$ is an UFD, we can set $\Delta^{\sigma}(\pi)=\textrm{gcd } \tilde{\sigma}(E(	\pi))$. The $\textrm{gcd}$ is defined up to multiplication of elements of $G$ and of $\C^{\ast}$. 
The change of the choice of the splitting $H=  \textrm{Tors}H  \times G$ produces an element $\psi \in \textrm{Hom}(G, \textrm{Tors}H)$, hence the polynomial  $\Delta^{\sigma}(\pi)=\sum_{g \in G} d_{g}g$ where $d_{g} \in \C$, is transformed into $\sum_{g \in G} d_{g} \sigma(\phi(g)) g$. When $\sigma=1$, we get the classical Alexander polynomial $\Delta^{1}(\pi)=\C^{\ast}\Delta(\pi)$.

Observe that the ring $\C[G]$ is the ring $\C[z_{1}^{\pm1}, \ldots, z_{k}^{\pm1}]$, where $k= \textrm{rank} (H)$. In order to simplify the computations is useful to reduce from the multi-variable to the one-variable twisted Alexander polynomials by means of the projection $\C[z_{1}^{\pm1}, \ldots, z_{k}^{\pm1}] \to\C[z^{\pm1}]$

\paragraph{Twisted Alexander polynomials for links in Seifert fibered spaces}

Given a link $L$ in the Seifert fibered space $M$ we will denote by
$\Delta^{\sigma}_{L}=\Delta^{\sigma}(\pi_{1}(M \smallsetminus L, \ast))$ the twisted Alexander polynomials associated to it and numerated by $\sigma \in \textrm{Hom}(\textrm{Tors} H_{1}(M \smallsetminus L), \C^{\ast})$

Please note that if $H_{1}(M \smallsetminus L)$ is finite, the Alexander polynomial is trivial, hence not significant, for this reason we proved Lemma \ref{rankone}, that guarantees us that this case cannot occur.

\paragraph{Twisted Alexander polynomial for local links}

Recall that a link $L$ is local if it is contained inside a 3-ball $B^{3}$ embedded in $M$.
Let $\bar{L}$ denote the local link $L \subset B^{3}$ embedded into $\s3$. The following theorem holds.

\begin{theorem}\label{local}
Let $L \subset M$ be a  local link. Then 
\begin{itemize}
\item if $H_{1}(M)$ is infinite, $\Delta^{\sigma}_L=0$;
\item if $H_{1}(M)$ is finite, the classical Alexander polynomial gives  
$ \Delta^{\sigma}_L=|H| \cdot \Delta_{\bar{L}}$, while for each $\sigma \neq 1$, $\Delta^{\sigma}_L=0$.
\end{itemize}

\end{theorem}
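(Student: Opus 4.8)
The plan is to reduce everything to the behaviour of Fox calculus under free products, exploiting that $L$ sits inside a ball. First I would record the topological splitting. Writing $M=(M\smallsetminus\textrm{int}\,B^{3})\cup_{S^{2}}B^{3}$ with $L\subset\textrm{int}\,B^{3}$, the gluing sphere $\partial B^{3}$ is simply connected, so Seifert--Van Kampen gives
\[
\pi_{1}(M\smallsetminus L,\ast)\cong\pi_{1}(M)\ast\pi_{1}(S^{3}\smallsetminus\bar{L}),
\]
using that deleting an open ball leaves $\pi_{1}(M)$ unchanged and that $\pi_{1}(B^{3}\smallsetminus L)\cong\pi_{1}(S^{3}\smallsetminus\bar{L})$ (cap off the complementary ball). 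Abelianizing recovers $H:=H_{1}(M\smallsetminus L)\cong H_{1}(M)\oplus\Z^{\nu}$ as in the corollary above, so $\textrm{Tors}\,H=\textrm{Tors}\,H_{1}(M)$ (finite exactly when $H_{1}(M)$ is) while the free part $G$ carries the $\Z^{\nu}$ coming from the meridians of $\bar{L}$.

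Next I would extract the first elementary ideal from the free-product presentation. Concatenating a presentation of $\pi_{1}(M)$ with a Wirtinger presentation of $\lambda:=\pi_{1}(S^{3}\smallsetminus\bar{L})$ yields a presentation whose Alexander--Fox matrix over $\Z[H]$ is block diagonal,
\[
A=\begin{pmatrix}A_{M}&0\\0&A_{\bar{L}}\end{pmatrix},
\]
since no relation mixes the two sets of generators. A minor of order (total number of generators minus one) can be nonzero only when the chosen rows and columns decompose into square sub-blocks, which forces all columns to be taken from one of the two blocks; hence
\[
E_{1}(\pi_{1}(M\smallsetminus L))=E_{0}(\pi_{1}(M))\,E_{1}(\lambda)+E_{1}(\pi_{1}(M))\,E_{0}(\lambda)
\]
as ideals of $\Z[H]$, where $E_{0}(\pi_{1}(M))$ denotes the extension along $\Z[H_{1}(M)]\hookrightarrow\Z[H]$ of the intrinsic zeroth ideal of $\pi_{1}(M)$. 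Since $\lambda^{\mathrm{ab}}=\Z^{\nu}$ is infinite, the structural description of $E_{0}$ recalled in the excerpt gives $E_{0}(\lambda)=0$, so the identity collapses to $E_{1}(\pi_{1}(M\smallsetminus L))=E_{0}(\pi_{1}(M))\,E_{1}(\lambda)$, with $\textrm{gcd}\,E_{1}(\lambda)=\Delta_{\bar{L}}$.

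Finally I would evaluate $E_{0}(\pi_{1}(M))$ and apply $\tilde{\sigma}$. If $H_{1}(M)$ is infinite then $E_{0}(\pi_{1}(M))=0$, whence $E_{1}(\pi_{1}(M\smallsetminus L))=0$ and $\Delta^{\sigma}_{L}=\textrm{gcd}\,\tilde{\sigma}(0)=0$ for every $\sigma$, giving the first bullet. If $H_{1}(M)$ is finite then $E_{0}(\pi_{1}(M))$ is generated by the norm element $N=\sum_{f\in H_{1}(M)}f\in\Z[\textrm{Tors}\,H]$, so $E_{1}(\pi_{1}(M\smallsetminus L))=(N)\,E_{1}(\lambda)$. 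As $N$ lies in the torsion part while the generators of $E_{1}(\lambda)$ lie in $\Z[G]$, applying $\tilde{\sigma}$ yields $\tilde{\sigma}(E_{1})=\big(\sum_{f}\sigma(f)\big)\cdot\big(E_{1}(\lambda)\otimes\C\big)$. Character orthogonality on the finite group $H_{1}(M)$ gives $\sum_{f}\sigma(f)=|H|$ for $\sigma=1$ and $0$ otherwise, so taking the gcd produces $\Delta^{\sigma}_{L}=|H|\cdot\Delta_{\bar{L}}$ when $\sigma=1$ and $\Delta^{\sigma}_{L}=0$ when $\sigma\neq1$, as claimed.

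The main obstacle is the middle step: making the free-product formula for $E_{1}$ rigorous, namely the block-diagonal minor analysis together with the identification of $E_{0}(\pi_{1}(M))$ computed over the larger ring $\Z[H]$ with the extension of the intrinsic $E_{0}(\pi_{1}(M))\subset\Z[H_{1}(M)]$. One must verify that minors over $\Z[H]$ are exactly the images of minors over $\Z[H_{1}(M)]$ under the ring inclusion, so that the stated $E_{0}$ dichotomy applies verbatim, and normalize the presentations (adding trivial relations to ensure the relation count dominates the generator count in each block) so that the elementary ideals remain presentation-independent. The surrounding steps --- the Seifert--Van Kampen splitting, the character sum, and the equality $\textrm{gcd}\,E_{1}(\lambda)=\Delta_{\bar{L}}$ --- are routine.
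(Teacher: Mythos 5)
Your proposal is correct and follows essentially the same route as the paper's proof: a block-diagonal Alexander--Fox matrix coming from the splitting of the presentation into manifold relations and Wirtinger relations, the identity expressing $E_{1}$ of the whole via products of $E_{0}$ and $E_{1}$ of the two factors, the vanishing of $E_{0}$ of the link-exterior factor because its abelianization is infinite, and the character sum over $H_{1}(M)$ to handle the torsion representations. The only cosmetic difference is that you phrase the splitting as a Seifert--Van Kampen free product while the paper reads it off the arrow diagram (arcs confined to a disk with no punctures or arrows), and you flag the ideal-extension subtlety more explicitly than the paper does.
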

\begin{proof}
We may assume that the link diagram has the link arcs all contained inside a disk with no punctures inside it and with no arrows.
The relations of the presentation of the fundamental group of $M \smallsetminus L$ are divided into two sets, one containing the Writinger relations that involves only the link generators, and the set of relations that involves only the manifold generators. 

Therefore the Alexander-Fox matrix $A$ splits into two blocks as follows:
$$A_L=\left(\begin{array}{cccccc}\ &A_{M}&\ &\  &0 &\ \\\ & 0 \ &\ & \ &A_{\bar{L}}&\ \end{array}\right),$$
where the matrices $A_{L}$, $A_{M}$ and $A_{\bar{L}}$ have respectively $m+n$, $m$ and $n$ columns.

As a consequence, if we denote by $\Delta_{d}=\textrm{gcd} \tilde{\sigma}(E_{d}(	\pi))$ and $\bar{L} \subset \s3$ the local link viewed as a link in the $3$-sphere, it holds $\Delta^{\sigma}_{L}=\Delta^{\sigma}_{1}= \textrm{gcd} (\Delta^{\sigma}_{1}(\pi_{1}(M))\cdot \Delta^{\sigma}_{0}(\pi_{1} ( \s3 \smallsetminus \bar{L})) , \Delta^{\sigma}_{0}(\pi_{1}(M))\cdot \Delta^{\sigma}_{1}(\pi_{1} (\s3 \smallsetminus \bar{L})) )$, because we are considering the $m+n-1$ minors, that are given by the combinations of $(m-1,n)$ and $(m, n-1)$ of minors of $A_{M}$ and $A_{\bar{L}}$.
Since $\Delta^{\sigma}_{n}(\pi_{1} (\s3 \smallsetminus \bar{L}))=0$ (because $H_{1}(\s3 \smallsetminus \bar{L})$ is infinite), we reduce to $\Delta^{\sigma}_{1}= \Delta^{\sigma}_{0}(\pi_{1}(M))\cdot \Delta^{\sigma}_{1}(\pi_{1} (\s3 \smallsetminus \bar{L})) )$.
Now if $H_{1}(M)$ if infinite, $ \Delta^{\sigma}_{0}(\pi_{1}(M))= \textrm{gcd} \tilde{\sigma}(E_{0}(	\pi_{1}(M)))=0$, hence $\Delta^{\sigma}_L=0$.
If $H_{1}(M)$ if finite and $\sigma=1$ then $\sum_{h \in H} h$ becomes $|H|$, so  $\Delta^{\sigma}_L=|H| \cdot \Delta_{\bar{L}}$.
If $H_{1}(M)$ if finite and $\sigma \neq 1$, then $H_{1}(M)=\Z_{p_{1}}\oplus\ldots\oplus\Z_{p_{k}}$ for some $k$; for each $\Z_{p_{i}}$, the corresponding generator is sent to a $p_{i}$-root of unity.  In this case $\sum_{h \in \Z_{p_{i}}} h=0$ and extending to $H$ we have $\sum_{h \in H} h=0$, so as before $ \Delta^{\sigma}_{0}(\pi_{1}(M))= \textrm{gcd} \tilde{\sigma}(E_{0}(	\pi_{1}(M)))=0$ and hence $\Delta^{\sigma}_L=0$.
\end{proof}

As a consequence a knot with a non trivial twisted (that is, $\sigma \neq 1$) Alexander polynomial cannot be local. See also Example \ref{es1}. Observe that in the case of lens spaces, where $H_{1}(L(p,q)) \cong \Z_{p}$, the second point of Theorem \ref{local} is exactly the result stated in \cite[Proposition 7]{CMM}.

\paragraph{Twisted Alexander polynomial for connected sum of links}

Let $L$ be a link in  $M$ such that it is a connected sum, that is to say $L=L_1\sharp L_2$ where $L_{1} \subset M$ and $L_{2} \subset \s3$.
The decomposition $(M,L)=(M,L_1)\sharp (\s3 ,L_2)$ induces  the monomorphisms $j_1:H_1(M \smallsetminus L_1)\to H_1(M \smallsetminus L)$ and $j_2:H_1(\s3\smallsetminus L_2)\to H_1( M \smallsetminus L)$. Given $\sigma:\Z [H_1(M \smallsetminus L)]\to \C[G]$  induced by 
\hbox{$\sigma\in\hom(\textup{Tors}(H_1(L(p,q) \smallsetminus L)),\mathbb C^*)$,}  denote by  $\sigma_1$ and $\sigma_2$ its restrictions to $\C[j_1(H_1(M\smallsetminus L_1))]$  and $\C[j_2(H_1(\s3\smallsetminus L_2))]$, respectively. We have the following result.

\begin{theorem}
Let $L=L_1\sharp L_2\subset L(p,q)$, where  $L_2$ is local link.  
With the above notations it holds that $\Delta_L^{\sigma}=\Delta_{L_1}^{\sigma_1}\cdot  \Delta_{L_2}^{\sigma_2}$.
\end{theorem}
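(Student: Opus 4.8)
The plan is to decompose the complement $M\smallsetminus L$ along the connected-sum sphere and then analyze how the Alexander--Fox matrix splits, exactly in the spirit of the proof of Theorem~\ref{local}. Write $(M,L)=(M,L_{1})\#(S^{3},L_{2})$: the summing $2$-sphere $\Sigma$ meets $L$ in two points, so $\Sigma\smallsetminus L$ is an annulus whose core is a meridian $\mu$ of the connect-sum component. Thus $M\smallsetminus L$ is the union of a copy of $M\smallsetminus L_{1}$ and a copy of $S^{3}\smallsetminus L_{2}$ glued along this annulus, and the Seifert--Van Kampen theorem gives
\[
\pi_{1}(M\smallsetminus L)\cong \pi_{1}(M\smallsetminus L_{1})\ast_{\langle\mu\rangle}\pi_{1}(S^{3}\smallsetminus L_{2}),
\]
the amalgamation identifying the two meridians $\mu_{1}$ and $\mu_{2}$. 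Concretely, I would take Wirtinger-type presentations $\langle X\mid R\rangle$ and $\langle Y\mid S\rangle$ of the two factors and form the presentation $\langle X,Y\mid R,S,\mu_{1}\mu_{2}^{-1}\rangle$ of $\pi_{1}(M\smallsetminus L)$. A small but decisive preparatory step is to choose each meridian to be a single Wirtinger generator, say $\mu_{1}=x_{i_{0}}$ and $\mu_{2}=y_{j_{0}}$, so that the identifying relation is simply $x_{i_{0}}y_{j_{0}}^{-1}$.

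With this presentation the Alexander--Fox matrix acquires a block-plus-coupling shape: the rows coming from $R$ are supported on the $X$-columns and assemble the matrix $A_{1}$ of $L_{1}$, the rows from $S$ are supported on the $Y$-columns and assemble $A_{2}$ of $L_{2}$, all off-diagonal blocks vanish since $\partial r/\partial y=\partial s/\partial x=0$, and the single extra row from $x_{i_{0}}y_{j_{0}}^{-1}$ is, thanks to the generator choice, a unit vector: its only nonzero entries sit in columns $i_{0}$ and $j_{0}$ and are units of $\C[G]$. Applying the projection $\tilde\sigma$ I would record that it restricts to $\sigma_{1}$ on the $X$-block and to $\sigma_{2}$ on the $Y$-block, so that $\tilde\sigma(A_{1})$ and $\tilde\sigma(A_{2})$ are precisely the matrices whose $E_{1}$-gcd's are $\Delta^{\sigma_{1}}_{L_{1}}$ and $\Delta^{\sigma_{2}}_{L_{2}}$.

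The heart of the argument is then a minor computation for $E_{1}$ of the big matrix, mirroring the block calculation in Theorem~\ref{local}. Expanding each maximal minor along the coupling row, one sees that every contribution in which the two blocks are not linked through the unit entry forces a top-order (codimension-zero) minor of one of the blocks, i.e.\ an element of $E_{0}(\pi_{1}(M\smallsetminus L_{1}))$ or of $E_{0}(\pi_{1}(S^{3}\smallsetminus L_{2}))$. Both of these ideals vanish: $H_{1}(S^{3}\smallsetminus L_{2})$ is free abelian of positive rank, and $H_{1}(M\smallsetminus L_{1})$ is infinite by Lemma~\ref{rankone}, so the corresponding $E_{0}$ is $0$. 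Hence only the linked terms survive, and they fall into two families: minors obtained by deleting an $X$-column factor as $m_{1}\cdot m_{2}^{(j_{0})}$, where $m_{1}$ ranges over the $E_{1}$-minors of $A_{1}$ and $m_{2}^{(j_{0})}$ is a minor of $A_{2}$ with the meridian column $j_{0}$ deleted; minors obtained by deleting a $Y$-column factor symmetrically as $m_{1}^{(i_{0})}\cdot m_{2}$. Taking gcd's, $\tilde\sigma(E_{1})$ is generated by $\Delta^{\sigma_{1}}_{L_{1}}\cdot d_{j_{0}}$ and $e_{i_{0}}\cdot\Delta^{\sigma_{2}}_{L_{2}}$, where $d_{j_{0}}$ and $e_{i_{0}}$ are the gcd's of the minors of $A_{2}$ and $A_{1}$ obtained by deleting the single meridian column.

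The step I expect to be the main obstacle is the final gcd bookkeeping: I must check that restricting attention to minors in which the meridian column is the deleted one still computes the full $E_{1}$ of each factor, i.e.\ that $d_{j_{0}}=\Delta^{\sigma_{2}}_{L_{2}}$ and $e_{i_{0}}=\Delta^{\sigma_{1}}_{L_{1}}$ rather than proper multiples; once this is known the two generators above coincide and their gcd is $\Delta^{\sigma_{1}}_{L_{1}}\cdot\Delta^{\sigma_{2}}_{L_{2}}$, as claimed. This is exactly the classical phenomenon for knots in $S^{3}$, controlled by the fundamental formula of Fox calculus $\sum_{j}(\partial r/\partial x_{j})(\overline{x_{j}}-1)=0$, which relates the minors obtained by deleting different columns and shows that deleting a meridian column recovers $E_{1}$ up to units. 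A secondary point to verify is the compatibility of the fixed splitting $H=\operatorname{Tors}H\times G$ with the two sub-abelianizations, so that $\sigma_{1}$ and $\sigma_{2}$ are genuinely the restrictions appearing in the statement; since the shared meridian has infinite order this causes no trouble.
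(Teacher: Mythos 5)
Your argument follows essentially the same route as the paper: decompose via Seifert--Van Kampen, observe that the Alexander--Fox matrix is block diagonal apart from the coupling row(s) identifying meridional generators, and compute the gcd of the maximal minors as the product of the two factors' polynomials, using the vanishing of the $E_0$-ideals (homology of each piece being infinite) to kill the unlinked terms. The only real difference is cosmetic --- the paper identifies two pairs of overpass generators from the diagrammatic connected sum rather than amalgamating over a single meridian --- and the paper compresses your entire minor analysis, including the column-deletion gcd bookkeeping you rightly flag as the delicate step, into the phrase ``a simple computation shows,'' so your version actually supplies detail the paper omits.
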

\begin{proof}
Since $(M,L)=(M,L_1)\sharp (\s3,L_2)$, by Seifert-Van Kampen theorem we have that  $\pi_1(M \smallsetminus L)$ has the generators of both $\pi_1(M\smallsetminus L_1,*)$ and $\pi_1(\s3\smallsetminus L_2, *)$, moreover, it has the relations of both of them, with the two additional conditions that joins the overpasses of the connected sum. So  the Alexander-Fox matrix  of $L$ is 
$$A_L=\left(\begin{array}{cccccc}\ &A_{L_1}&\ &\  &0 &\ \\\ & 0 \ &\ & \ &A_{L_2}&\ \\-1\ 0&\cdots& 0&1\ 0&\cdots&\ 0 \ \\0\ 1&\cdots& 0&0\ -1 &\cdots&\ 0 \end{array}\right),$$
where $A_{L_i}$ is the Alexander-Fox matrix of $L_i$, for $i=1,2$. If $d_k(A)$ denotes the greatest common division of all $k$-minors of a matrix $A$, then a simple computation shows that $d_{m+n-1}(A_L)=d_{n-1}(A_{L_1})\cdot d_{m-1}(A_{L_2})$. Therefore it is easy to see that $\Delta_L^{\sigma}=\Delta_{L_1}^{\sigma_1}\cdot  \Delta_{L_2}^{\sigma_2}$.
\end{proof}

\section{Example}\label{linkexample}

The following example is computed in part by hand for the group presentation starting from an arrow diagram and in part by computer.

\begin{example}\label{es1}

Consider the knot $K$ in the Seifert fibered space  $S(O, n, 1 | (1,2) )$ described by the diagram of Figure~\ref{fig:ex6}. It holds $\gamma=+1$ and $\delta=-1$.

\begin{figure}[htb]
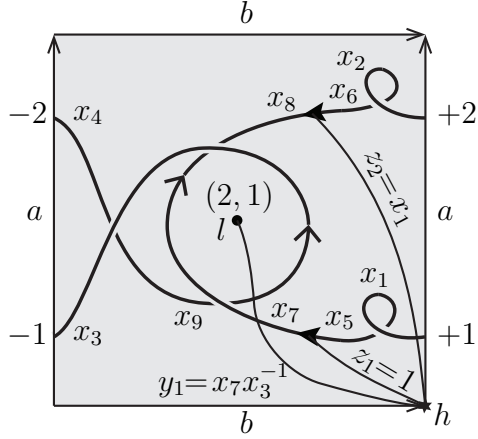

\centering
\begin{overpic}[page=45]{seif}
\put(101,50){$a$}\put(-5,50){$a$}  \put(50,-5){$b$}\put(50,101){$b$} 
\put(101,18){$+1$}\put(101,75){$+2$} \put(-10,18){$-1$}\put(-10,75){$-2$} 
\put(81,34){$x_1$}\put(75,90){$x_2$}\put(7,19){$x_3$}\put(7,76){$x_4$}
\put(72,23){$x_5$}\put(73,81.5){$x_6$}\put(58,25){$x_7$}\put(57,79.5){$x_8$}
\put(33,23){$x_9$}
\put(41,54){$(2,1)$}\put(44,45){$l$}
\put(29,6){$y_1\!\!=\!x_7x_3^{-1}$}\put(78,14.5){\rotatebox{-31}{$z_1\!\!=\!1$}}\put(81,67){\rotatebox{-62}{$z_2\!\!=\!x_1$}}
\put(100,-3){$h$}
\end{overpic}
\caption{Example of a knot in $S(O, n, 1 | (1,2) )$.}
\label{fig:ex6}
\end{figure}

According to Theorem \ref{OrGroup}, we wrote aside to each overpass the corresponding generator $x_{i}$.  The corresponding coefficients $\epsilon_{i}$, when necessary, are: $\epsilon_{1}=+1$, $\epsilon_{2}=-1$, $\epsilon_{3}=-1$, $\epsilon_{4}=+1$, $\epsilon_{5}=-1$, $\epsilon_{6}=+1$, $\epsilon_{7}=+1$ and $\epsilon_{8}=-1$.
The complete group presentation is the following one.
\begin{align*}
\pi_{1}&( M \smallsetminus K)= \langle \ x_{1}, x_{2}, \ldots, x_{8}, x_{9}, h, l, a, b \ | \\
&| \ x_{1}=x_{5}, x_{2}=x_{6}, x_{9}x_{7}x_{3}^{-1}x_{7}^{-1}=1, x_{7}x_{3}x_{8}^{-1}x_{3}^{-1}=1, x_{3}x_{4}x_{3}^{-1}x_{9}^{-1}=1, \\
&aba^{-1}b^{-1}l^{-2}x_{5}x_{6}^{-1}=1, x_{1}x_{2}^{-1}a h a^{-1}h^{-1}=1, b h b^{-1}h=1, \\
&l^{-2}= h^{-1}x_{7}x_{3}^{-1}l^{-2}x_{3}x_{7}^{-1}h, x_{7}=h^{-1} x_{5}h, x_{8}^{-1}=h^{-1}x_{1}x_{6}^{-1}x_{1}^{-1}h, \\
&x_{3}^{-1}= ab a^{-1}x_{1}^{-1} a b^{-1}a^{-1}, x_{4}= ab a^{-1}x_{2} a b^{-1}a^{-1}, l=x_{3}x_{7}^{-1}h \rangle
\end{align*} 

In order to compute the twisted Alexander polynomials, it is necessary to assign the correct power of the variable $z$ to each generator of the group. 
Using Equation \ref{homMnorFor}, the first homology group of the knot complement is $$H_{1}(M \smallsetminus K) = \langle \ g, l, h, a ,b \ | \ l^{-2}=1, h^{2}=1, l=h \ \rangle. $$
Hence $H_{1}(M \smallsetminus K) \cong \Z^{3} \oplus \Z_{2}$, where the free generators are $g, a$ and $b$, while $l$ and $h$ are sent to the generator of $\Z_{2}$.
In this case each generator $x_{i}$ of the group of the knot is sent to $g$, so the homomorphism $\tilde{\sigma}_{1} \colon \Z[\pi] \to \C[G]$, associated to the usual Alexander polynomial, sends $x_{i}$ to $z$, $a$ and $b$ again to $z$, while $l$ and $h$ are sent to $1$. Considering the twisted Alexander polynomial associated to the representation $\tilde{\sigma}_{-1}$, the free generators of the homology are sent to $z$ as before, while $l$ and $h$ are sent in $-1$. The number of the one-dimensional twisted Alexander polynomials we are going to find is the cardinality of the torsion part of the homology group of the knot, in this case it is two.

With these pieces of information, the computations shows that:

$$\Delta^{\sigma_{1}}_{K}=z^{4}-2z^{3}+2z-1$$
$$\Delta^{\sigma_{-1}}_{K}=z^{2} - 1$$

Using the coefficients of Lemma \ref{homologyclass}, the homology class $[K] \subset H_{1}(M) \cong \Z^{2} \oplus \Z_{2}$ is trivial. Theorem \ref{local} guarantees us that the knot $K$ is non-local, since its twisted Alexander polynomials are non-zero. 

\end{example}

\paragraph{acknowledgements}
The authors would like to thank Matija Cencelj for promoting their collaboration and Alessia Cattabriga for useful insights on twisted Alexander polynomials.

%
%




\newpage

\vspace{15 pt} {BO\v{S}TJAN GABROV\v{S}EK, Faculty of Mechanical Engineering, 
University of Ljubljana, SLOVENIA. E-mail: bostjan.gabrovsek@fs.uni-lj.si}

\vspace{15 pt} {ENRICO MANFREDI, Department of Mathematics,
University of Bologna, ITALY. E-mail: enrico.manfredi3@unibo.it}

\end{document}